\newtheorem{theorem}{Theorem}[section]
\newtheorem{lemma}[theorem]{Lemma}
\newtheorem{proposition}[theorem]{Proposition}
\newtheorem{cor}[theorem]{Corollary}
\newtheorem{example}[theorem]{Example}
\newtheorem{question}[theorem]{Question}
\theoremstyle{remark}
\newtheorem{remark}[theorem]{\bf{Remark}}
\numberwithin{equation}{section}
\begin{document}
	\title [Berezin range of Toeplitz and composition operators]    
	{{On the Berezin range of Toeplitz and weighted composition operators on weighted Bergman spaces}}

\author[A. Sen, S. Barik and K. Paul]{Anirban Sen, Somdatta Barik and Kallol Paul}
	
\address[Sen] {Department of Mathematics, Jadavpur University, Kolkata 700032, West Bengal, India}
\email{anirbansenfulia@gmail.com}

\address[Barik]{Department of Mathematics, Jadavpur University, Kolkata 700032, West Bengal, India}
\email{bariksomdatta97@gmail.com}

\address[Paul] {Vice-Chancellor\\
	Kalyani University\\
	West Bengal 741235 \\and 
	Professor (on lien)\\ Department of mathematics\\ Jadavpur University\\Kolkata 700032\\West Bengal\\India}
\email{kalloldada@gmail.com}

\subjclass[2020]{Primary: 47B32, 47B38; Secondary: 47A05, 47B33, 47B35}

\keywords{Berezin range, Toeplitz operator, weighted composition operator, weighted Bergman spaces.}

\maketitle	
\begin{abstract}
    In this article, we completely characterize the Berezin range of Toeplitz operators with harmonic symbols acting on weighted Bergman spaces, illustrating the necessity of the harmonicity condition through examples. We then introduce a new class of weighted composition operators on these spaces, investigating their fundamental properties and determining their Berezin range and Berezin number. Finally, we study the convexity of the Berezin range of composition operators on weighted Bergman spaces and show that the origin lies in its closure of Berezin range but not in the range itself.
\end{abstract}

\section{Introduction}

In a reproducing kernel Hilbert space, the Berezin range of an
operator is contained within the numerical range of that operator. A fundamental and well-known property of the numerical range of an operator is its convexity. This naturally leads to the question:
Is the Berezin range of an operator also convex? In general, the Berezin range is not necessarily convex. This question was first raised by Karaev in \cite{K_CAOT_2013}. Subsequently, Cowen and Felder provided answers for certain class of composition operators in the Hardy-Hilbert space. 
In this article, our aim is to study the Berezin range as well as its convexity for two important classes of operators: Toeplitz operators and weighted composition operators.
At this point, we introduce the notation and key concepts that will serve as the foundation for the discussions to follow.

	Let $\mathbb{D}=\{w \in \mathbb{C} : |w|<1\}$ be the open unit disk of $\mathbb{C}$ and $\mathbb{T}=\{w \in \mathbb{C} : |w|=1\}$ be the unit circle of $\mathbb{C}.$ Let $dA$ denote the area measure on $\mathbb{D},$ normalized so that the area of $\mathbb{D}$ is 1. For $\gamma>-1,$ we consider the positive Borel measure $dA_{\gamma}(w)=(\gamma+1)(1-|w|^2)^{\gamma}dA(w)$ on $\mathbb{D}.$ For fixed $\gamma>-1,$ the weighted Bergman space is denoted by $A_{\gamma}^2(\mathbb{D}),$ and is defined as
	$$A^2_{\gamma}(\mathbb{D})=\left\{\text{$f \in \mathcal H(\mathbb{D})$} : \int_{\mathbb{D}}|f(w)|^2dA_{\gamma}<\infty \right\},$$
     where $\mathcal H(\mathbb{D})$ is the class of all holomorophic functions on $\mathbb{D}.$
	It is well known that $A^2_{\gamma}(\mathbb{D})$ is a reproducing kernel Hilbert space. The reproducing kernels and normalized reproducing kernels of $A^2_{\gamma}(\mathbb{D})$ are given by 
	$$k^{\gamma}_\xi(w)=\frac{1}{(1-\bar{\xi}w)^{\gamma+2}}\,\,\,\,\, \xi,w \in \mathbb{D}$$ 
	and
	$$\hat{k}^{\gamma}_\xi(w)=\frac{(1-|\xi|^2)^{\frac{\gamma+2}{2}}}{(1-\bar{\xi}w)^{\gamma+2}}\,\,\,\,\, \xi,w \in \mathbb{D},$$
	respectively. 
For more about weighted Bergman spaces, we refer to \cite{Zhu_book}.

Next, we introduce two important classes of operators on weighted Bergman spaces, namely, Toeplitz and weighted composition operators. 
Let $P_{\gamma}$ denote the orthogonal projection of $L^2(\mathbb{D}, dA_{\gamma})$ onto $A_\gamma^2(\mathbb D).$ Let $L^{\infty}(\mathbb{D}, dA_{\gamma})$ be the space of all complex measurable functions $\phi$ on $\mathbb{D}$ such that 
 $$\|\phi\|_{\infty,\gamma}=\sup\{ c\geq 0 : A_{\gamma}\left(\{z \in \mathbb{D} : |\phi(z)|>c\}\right)>0\}<\infty.$$
 For $\phi \in L^{\infty}(\mathbb{D}, dA_{\gamma}),$ the operator $T_{\phi}$ on $A_\gamma^2(\mathbb D)$ defined by
 $$T_{\phi}f=P_{\gamma}(\phi f)\,\,\,\, f \in A_\gamma^2(\mathbb D),$$
 is called the Toeplitz operator on $A_\gamma^2(\mathbb D)$ with symbol $\phi.$ It is easy to observe that $T_{\phi}$ is a bounded linear operator on $L_a^2(dA_{\gamma})$ with $\|T_{\phi}\|\leq \|\phi\|_{\infty,\gamma},$ see \cite{Zhu_book}.
 
Let $\phi : \mathbb{D} \to \mathbb{D}$ be an analytic self map and $\psi \in \mathcal H(\mathbb{D}).$ The weighted composition operator $C_{\psi,\phi } : \mathcal H(\mathbb{D}) \to \mathcal H(\mathbb{D})$ is defined by
 $$C_{\psi,\phi }f=\psi(f\circ \phi)\,\,\,\,\,f \in \mathcal H(\mathbb{D}).$$

 In particular, for $\psi=1,$ $C_{\psi,\phi }$ becomes the unweighted composition operator $C_{\phi}.$ In this article we limit our analysis of weighted composition operators on $A_\gamma^2(\mathbb D).$ There is a vast literature dealing with boundedness of weighted composition operators on weighted Bergman spaces, and we refer to \cite{CZ_IJM_2007,GKP_IEOT_2010}.

	For any bounded linear operator $T$ on $A^2_{\gamma}(\mathbb{D}),$ the Berezin transform of $T$ on $A^2_{\gamma}(\mathbb{D})$ is the function $\widetilde{T}: \mathbb{D} \to \mathbb{C}$ defined as
	\[\widetilde{T}(\xi)=\langle T\hat{k}^{\gamma}_\xi,\hat{k}^{\gamma}_\xi \rangle\,\,\,\,\xi \in \mathbb{D}.\]

It is well known that the Berezin transform is a one-to-one, bounded, and real-analytic function on $\mathbb D,$ see \cite{BER1972}. The properties of the operator are dependent on the Berezin transform of that operator. For instance, the Berezin transform uniquely determines the operator, and the invertibility of an operator is also dependent on its Berezin transform, see \cite{KAR_JFA_2006, ZZ_JOT_2016}. For more about Berezin transform we refer to \cite{E_JFA_1994,NR_Conference_1994, Zhu_AMS_2021, Z_PAMS_2003}.

	The Berezin range and Berezin number of $T$, denoted by $\textbf{Ber}(T)$ and  $\textbf{ber}(T),$ are respectively, defined as 
	\[\textbf{Ber}(T)=\{\widetilde{T}(\xi): \xi \in \mathbb{D}\} \,\, 
	and \,\, \textbf{ber}(T)=\sup_{\xi \in \mathbb{D}}|\widetilde{T}(\xi)|.\]

In \cite{K_CAOT_2013, KAR_JFA_2006}, Karaev introduced the Berezin range and Berezin radius, and determined the Berezin range of some classes of operators acting on Model space. Recently, in \cite{CC_LAA_2022} Cowen and Felder studied the convexity of the Berezin range of some classes of composition operators on the Hardy-Hilbert space. They raised a question about the convexity of the Toeplitz operator on general reproducing kernel Hilbert space and it remains unsolved till now.

In \cite{E_LAA_1995}, Engliš obtain a relation between the Berezin transform of Toeplitz operator on Hardy-Hilbert space and the Poission integral of its symbol, namely,
if $\phi\in L^{\infty}(\mathbb{T})$ then $\widetilde{T_{\phi}}(\xi)=\mathcal P[\phi](\xi),$ where $\mathcal P[\phi]$ is the Poission integral of $\phi$ on $\mathbb D.$ Hence $\textbf{Ber}(T_{\phi})=\mathcal P[\phi](\mathbb{D})$ and so the Berezin range of $T_{\phi}$ is convex if and only if $\mathcal P[\phi](\mathbb{D})$ is convex. However, verifying the convexity of the range of the Poisson integral of $\phi$ remains a nontrivial task. Nevertheless, we can conclude that the Berezin range of $T_{\phi}$ is always a connected subset of $\mathbb C.$

The convexity of composition operators on the Bergman space, the Fock space, and the Dirichlet space has also been studied in \cite{AGS_CAOT_2024, AGS_CAOT_2023}. Apart from these, there are only a few articles on the geometry of the Berezin range of operators on functional Hilbert spaces, and the study of the Berezin range has yet to be explored in its full depth.

Including the introductory one, our article is divided into four sections. In Section 2, we completely characterize the Berezin range of Toeplitz operator with harmonic symbol acting on weighted Bergman spaces. We also provide examples to demonstrate that the harmonic property is necessary for this characterization. Our main aim in Section 3 is to introduce a new class of weighted composition operators on weighted Bergman spaces. We study some basic properties and obtain Berezin range and Berezin number of these operators. In Section 4, we characterize the convexity of the Berezin range of composition operators on weighted Bergman spaces. Also, we obtain that the origin is contained in the closure of the Berezin range of composition operator but not in the Berezin range of that operator.

	\section{Berezin range of Toeplitz operator}

This section is devoted to the complete determination of the Berezin range associated with Toeplitz operators $T_\phi$ acting on 
$A^2_{\gamma}(\mathbb{D})$ with harmonic symbol $\phi$. The analysis begins with the following key theorem.

	\begin{theorem}\label{WBT1}
		If $\phi \in L^{\infty}(\mathbb{D},dA_{\gamma})$ is harmonic on $\mathbb{D}$, then $\textbf{Ber}(T_{\phi})=\phi(\mathbb{D})$.
	\end{theorem}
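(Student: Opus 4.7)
\medskip

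The plan is to show the stronger statement that $\widetilde{T_\phi}(\xi) = \phi(\xi)$ for every $\xi \in \mathbb{D}$, from which $\textbf{Ber}(T_\phi) = \phi(\mathbb{D})$ is immediate.

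First I would unwind the definitions. Since $\hat{k}^\gamma_\xi \in A_\gamma^2(\mathbb{D})$ and $P_\gamma$ is self-adjoint, we have
\[
\widetilde{T_\phi}(\xi) = \langle P_\gamma(\phi \hat{k}^\gamma_\xi), \hat{k}^\gamma_\xi\rangle = \langle \phi \hat{k}^\gamma_\xi, \hat{k}^\gamma_\xi\rangle = \int_\mathbb{D} \phi(w)\,|\hat{k}^\gamma_\xi(w)|^2\,dA_\gamma(w).
\]
Thus the Berezin transform of $T_\phi$ coincides with the (weighted) Berezin transform of the symbol $\phi$.

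Next I would perform the standard change of variables via the M\"obius involution $\varphi_\xi(z)=\frac{\xi - z}{1-\bar\xi z}$, which maps $\mathbb{D}$ onto itself and satisfies $\varphi_\xi(0)=\xi$. Using the identities
\[
1-\bar\xi\,\varphi_\xi(z) = \frac{1-|\xi|^2}{1-\bar\xi z}, \qquad 1-|\varphi_\xi(z)|^2 = \frac{(1-|\xi|^2)(1-|z|^2)}{|1-\bar\xi z|^2}, \qquad |\varphi_\xi'(z)|^2 = \frac{(1-|\xi|^2)^2}{|1-\bar\xi z|^4},
\]
one checks that all the powers of $(1-|\xi|^2)$ and $|1-\bar\xi z|$ cancel out exactly, leaving
\[
\int_\mathbb{D} \phi(w)\,|\hat{k}^\gamma_\xi(w)|^2\,dA_\gamma(w) = \int_\mathbb{D} (\phi\circ\varphi_\xi)(z)\,dA_\gamma(z).
\]
This cancellation is essentially the reason the normalization of $\hat{k}^\gamma_\xi$ is chosen the way it is, and I would carry it out once explicitly.

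Now I would invoke harmonicity: since $\varphi_\xi$ is holomorphic on $\mathbb{D}$ and $\phi$ is harmonic, $\phi\circ\varphi_\xi$ is harmonic on $\mathbb{D}$. The final step is the weighted mean value property
\[
\int_\mathbb{D} h(z)\,dA_\gamma(z) = h(0)
\]
valid for any harmonic $h \in L^1(\mathbb{D}, dA_\gamma)$; this follows quickly by passing to polar coordinates, applying the ordinary circular mean value property to $h$, and evaluating the radial integral $(\gamma+1)\int_0^1 2r(1-r^2)^\gamma dr = 1$. Applied to $h=\phi\circ\varphi_\xi$, this yields $\widetilde{T_\phi}(\xi)=(\phi\circ\varphi_\xi)(0)=\phi(\xi)$, completing the proof.

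The only mildly delicate point is the bookkeeping in the change of variables; the rest is conceptually the observation that the Berezin transform on $A_\gamma^2(\mathbb{D})$ reproduces harmonic symbols, which in turn rests on the fact that the M\"obius group $\mathrm{Aut}(\mathbb{D})$ acts by measure-preserving transformations on $(\mathbb{D},dA_\gamma)$ after absorbing $|\hat{k}^\gamma_\xi|^2$.
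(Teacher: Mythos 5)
Your proposal is correct and follows essentially the same route as the paper: rewrite $\widetilde{T_\phi}(\xi)$ as $\int_{\mathbb D}\phi\,|\hat k^\gamma_\xi|^2\,dA_\gamma$, change variables by the M\"obius involution to obtain $\int_{\mathbb D}(\phi\circ\varphi_\xi)\,dA_\gamma$, and apply the mean value property of harmonic functions for the radial probability measure $dA_\gamma$. The only difference is that you sketch the proof of that mean value property (circular means plus the radial integral), where the paper simply cites Ahern--Flores--Rudin.
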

	
	\begin{proof}
		Let $\hat{k}^{\gamma}_w$ be the normalized reproducing kernel at $w \in \mathbb{D}.$ Then from the definition of the Berezin transform we have
		\begin{align*}
			\widetilde{T_{\phi}}(w)=\langle T_{\phi}\hat{k}^{\gamma}_w,\hat{k}^{\gamma}_w\rangle
			=\langle P_{\gamma}(\phi\hat{k}^{\gamma}_w),\hat{k}^{\gamma}_w\rangle
			=\langle \phi\hat{k}^{\gamma}_w,P_{\gamma}\hat{k}^{\gamma}_w\rangle
			=\langle \phi\hat{k}^{\gamma}_w,\hat{k}^{\gamma}_w\rangle.
		\end{align*}
		This implies that
		\begin{align*}
			\widetilde{T_{\phi}}(w) =\int_{\mathbb{D}}\phi(z)|\hat{k}^{\gamma}_w(z)|^2dA_{\gamma}(z)
			&=\int_{\mathbb{D}}\phi(z)\frac{(1-|w|^2)^{\gamma+2}}{|1-\bar{w}z|^{2(\gamma+2)}}dA_{\gamma}(z)\\
			&=\int_{\mathbb{D}}\phi \circ \phi_{w}(z)dA_{\gamma}(z),
		\end{align*}
		where $\phi_{w}(z)=\frac{w-z}{1-\bar{w}z}$ for all $z \in \mathbb{D}.$
		Now, by the mean-value property of harmonic functions (see \cite{AFW_JFA_1993}), we obtain
		\begin{align*}
			\widetilde{T_{\phi}}(w)
			=\int_{\mathbb{D}}\phi \circ \phi_{w}(z)dA_{\gamma}(z)=\phi \circ \phi_{w}(0)=\phi(w).
		\end{align*}
		Therefore, we obtain the desired relation $\textbf{Ber}(T_{\phi})=\phi(\mathbb{D})$.
	\end{proof}

	It is straightforward to establish the convexity of the Toeplitz operator $T_\phi$ based on the previous theorem.
	
	\begin{cor}
		Let $\phi \in L^{\infty}(\mathbb{D},dA_{\gamma})$ be harmonic on $\mathbb{D}$. Then the Berezin range of $T_{\phi}$ is convex if and only if $\phi(\mathbb{D})$ is convex.
	\end{cor}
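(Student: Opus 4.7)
The plan is to deduce the corollary as an immediate consequence of Theorem \ref{WBT1}. That theorem already establishes the set-theoretic identity $\textbf{Ber}(T_{\phi}) = \phi(\mathbb{D})$ whenever $\phi$ is harmonic and bounded. Since convexity is a property of a subset of $\mathbb{C}$ that is preserved under set equality, the biconditional follows with essentially no additional work.

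Concretely, I would proceed as follows. First, invoke Theorem \ref{WBT1} with the given hypothesis that $\phi \in L^{\infty}(\mathbb{D}, dA_{\gamma})$ is harmonic on $\mathbb{D}$, producing the equality $\textbf{Ber}(T_{\phi}) = \phi(\mathbb{D})$. Next, recall that a subset $S \subseteq \mathbb{C}$ is convex if and only if for every $z_1, z_2 \in S$ and every $t \in [0,1]$ the point $tz_1 + (1-t)z_2$ lies in $S$. Since the defining condition refers only to the underlying set, if $S = S'$ as sets then $S$ is convex if and only if $S'$ is convex. Applying this to $S = \textbf{Ber}(T_{\phi})$ and $S' = \phi(\mathbb{D})$ yields the claim.

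There is no genuine obstacle to overcome here; the entire content of the corollary has been packaged into Theorem \ref{WBT1}. The only thing worth noting is that the harmonicity assumption is essential in two places: it is what allowed the mean-value argument in the proof of Theorem \ref{WBT1} to conclude $\widetilde{T_{\phi}}(w) = \phi(w)$, and without it the identification $\textbf{Ber}(T_{\phi}) = \phi(\mathbb{D})$ may fail, which is precisely why the examples promised earlier in the section (showing necessity of harmonicity) are relevant context. Thus the proof will amount to a single sentence citing Theorem \ref{WBT1} followed by the observation that convexity is preserved under set equality.
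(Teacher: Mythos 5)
Your proposal is correct and follows exactly the route the paper intends: the paper gives no separate proof, noting only that the corollary is ``straightforward'' from Theorem \ref{WBT1}, and your argument --- cite the identity $\textbf{Ber}(T_{\phi})=\phi(\mathbb{D})$ and observe that convexity is a property of the underlying set --- is precisely that one-line deduction.
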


Next, we provide examples to demonstrate that the harmonic property of $\phi$ is necessary for the characterization of the Berezin range of the Toeplitz operator $T_\phi.$

	\begin{example}
		(i)~~Consider the function
		$$\phi(w)=\begin{cases}
			1,~~~\,\,\,\, |w| \leq \frac{1}{2}\\
			0,~~~\,\,\,\, \frac{1}{2} <|w| < 1.
		\end{cases}$$
		Clearly, $\phi$ is not continuous on $\mathbb{D}$ and $\phi(\mathbb{D})=\{0,1\}.$ Now, for $w \in \mathbb{D},$ we have
		\begin{align*}
			\widetilde{T_{\phi}}(w)
			=\langle T_{\phi}\hat{k}^{\gamma}_w,\hat{k}^{\gamma}_w\rangle
			=\int_{\mathbb{D}}\phi(z)|\hat{k}^{\gamma}_w(z)|^2dA_{\gamma}(z)
			=\int_{\mathbb{D}_{1/2}}|\hat{k}^{\gamma}_w(z)|^2dA_{\gamma}(z),
		\end{align*}
		where $\mathbb{D}_{1/2}=\{z \in \mathbb{D} : |z| \leq \frac12\}.$ 
		Thus if $\widetilde{T_{\phi}}(w)=0$ then we have $$\int_{\mathbb{D}_{1/2}}|\hat{k}^{\gamma}_w(z)|^2dA_{\gamma}(z)=0.$$ 
		This implies that $\hat{k}^{\gamma}_w(z)=0$ a.e. on $\mathbb{D}_{1/2},$ which is an absurd. Similarly, if $\widetilde{T_{\phi}}(w)=1$ then we have 
		$$\int_{\mathbb{D} \setminus\mathbb{D}_{1/2}}|\hat{k}^{\gamma}_w(z)|^2dA_{\gamma}(z)=0$$
		and this implies that $\hat{k}^{\gamma}_w(z)=0$ a.e. on $\mathbb{D} \setminus\mathbb{D}_{1/2},$ which is an absurd.
		Therefore, for this example, $\textbf{Ber}(T_{\phi})$ and $\phi(\mathbb{D})$ are disjoint subsets of $\mathbb{C}.$ Thus the condition $\phi$ is harmonic on $\mathbb{D}$ is necessary in Theorem \ref{WBT1}.

	(ii)~~Consider the function $\phi(w)=|w|^2$ on $\mathbb{D}.$ Then $\phi$ is continuous on $\mathbb{D}$ but the Laplacian of $\phi$ is non-zero on $\mathbb{D}.$ For $w \in \mathbb{D},$ we have
		\begin{align*}
			\widetilde{T_{\phi}}(w)
			=\langle T_{\phi}\hat{k}^{\gamma}_w,\hat{k}^{\gamma}_w\rangle
			=\langle \phi\hat{k}^{\gamma}_w,\hat{k}^{\gamma}_w\rangle
			=(1-|w|^2)^{\gamma+2}\int_{\mathbb{D}}|z\hat{k}^{\gamma}_w(z)|^2dA_{\gamma}(z).
		\end{align*}
		Now, by a simple computation we obtain
		\begin{align*}
			(1-|w|^2)^{\gamma+2}\int_{\mathbb{D}}|z\hat{k}^{\gamma}_w(z)|^2dA_{\gamma}(z)
			=(1-|w|^2)^{\gamma+2}\sum_{n=0}^{\infty}\frac{(n+1)\Gamma(n+\gamma+2)}{n!(n+\gamma+2)\Gamma(\gamma+2)}|w|^{2n}.
		\end{align*}
		Since, $\frac{n+1}{n+\gamma+2}\geq \frac{1}{\gamma+2}$ for all $n \geq 0$ so we get
		\begin{align*}
			&(1-|w|^2)^{\gamma+2}\sum_{n=0}^{\infty}\frac{(n+1)\Gamma(n+\gamma+2)}{n!(n+\gamma+2)\Gamma(\gamma+2)}|w|^{2n}\\
			&\geq \frac{1}{\gamma+2}(1-|w|^2)^{\gamma+2}\sum_{n=0}^{\infty}\frac{\Gamma(n+\gamma+2)}{n!\Gamma(\gamma+2)}|w|^{2n} = \frac{1}{\gamma+2}.
		\end{align*}
		Again, it is easy to observe that $\widetilde{T_{\phi}}(w)<1.$ Therefore, for this example
		$$\textbf{Ber}(T_{\phi}) \subseteq \Big[\frac{1}{\gamma+2},1\Big) \subsetneq (0,1)=\phi(\mathbb{D}).$$
		Thus the condition that $\phi$ is harmonic on $\mathbb{D}$ is necessary in Theorem \ref{WBT1}.
	\end{example}

From the first example, we observe that the Berezin range of the Toeplitz operator and the range of its symbol are disjoint sets, whereas in the second example, the Berezin range is properly contained within the range of its symbol. This leads to the following open question: 
\begin{question}
Can the Berezin range be described for Toeplitz operators with non-harmonic or more general symbols?
\end{question}

We now state several corollaries that follow from Theorem \ref{WBT1}.
    To proceed, recall that the numerical range of a bounded linear operator $T$ on $A_\gamma^2(\mathbb D)$ is defined as 
    $$W(T)=\{\langle Tf, f\rangle: f\in A_\gamma^2(\mathbb D), \|f\|=1\}.$$ Further details on the numerical range can be found in the book \cite{Gustafson_book}.

To derive the following corollary, we first require the following lemma, which is recently proved in \cite[Th. 3.6]{Sen_2025}.
    
	\begin{lemma}\label{lm_121}
		Let $\phi \in L^{\infty}(\mathbb{D}, dA_{\gamma})$ be a non-constant harmonic function on $\mathbb{D},$ then $W(T_{\phi})=$ $Rel$ $int$ $\overline{\phi(\mathbb{D})}^{\wedge}.$
	\end{lemma}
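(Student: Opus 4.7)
Writing $K := \overline{\phi(\mathbb{D})}^{\wedge}$ for the closed convex hull of the closure of the image, my plan is to prove three containments: $W(T_\phi)\subseteq K$, $\mathrm{Rel\ int}(K)\subseteq W(T_\phi)$, and $W(T_\phi)\cap \mathrm{Rel\ bd}(K)=\emptyset$. For the first, using that $P_\gamma$ is an orthogonal projection with $f\in A^2_\gamma(\mathbb{D})$ in its range, I would compute, for any unit vector $f$,
\[
\langle T_\phi f,f\rangle = \langle P_\gamma(\phi f), f\rangle = \langle \phi f, f\rangle = \int_{\mathbb{D}} \phi(z)\,|f(z)|^2\,dA_\gamma(z),
\]
and recognize the right-hand side as the barycenter of $\phi$ against the probability measure $|f|^2\,dA_\gamma$; continuity and boundedness of $\phi$ place the value inside $K$.

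For the second containment I would start from Theorem~\ref{WBT1}, which gives $\phi(\mathbb{D})=\textbf{Ber}(T_\phi)\subseteq W(T_\phi)$, and invoke the Toeplitz--Hausdorff theorem to extend this to $\mathrm{conv}(\phi(\mathbb{D}))\subseteq W(T_\phi)$. To identify $\mathrm{conv}(\phi(\mathbb{D}))$ with $\mathrm{Rel\ int}(K)$, I would write $\phi=h+\overline{g}$ with $h,g$ holomorphic on $\mathbb{D}$ and examine the real Jacobian $J=|h'|^2-|g'|^2$. If $J\not\equiv 0$, then $\phi$ is a local diffeomorphism at any point where $J\neq 0$, so $\phi(\mathbb{D})$ contains an open subset of $\mathbb{C}$, and a one-factor-at-a-time averaging argument shows $\mathrm{conv}(\phi(\mathbb{D}))$ is open in $\mathbb{C}$. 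If $J\equiv 0$, then $h'/g'$ is a holomorphic function of modulus one on the open dense set $\{g'\neq 0\}$ and hence a unimodular constant $e^{2i\alpha}$; a short computation reduces $\phi$ to $2e^{i\alpha}u+\text{const}$ for a non-constant real harmonic $u$, so $\phi(\mathbb{D})$ is an open interval on a line by the open mapping principle for real harmonic functions. Either way $\mathrm{conv}(\phi(\mathbb{D}))$ is open in the affine hull of $K$ and therefore coincides with $\mathrm{Rel\ int}(K)$.

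For the third, suppose $c\in\mathrm{Rel\ bd}(K)\cap W(T_\phi)$ is realized as $c=\langle T_\phi f, f\rangle$, and choose an affine functional $\ell$ on the affine hull of $K$ supporting $K$ at $c$, with $\ell\leq 0$ on $K$ and $\ell(c)=0$. Then
\[
0 = \ell(c) = \int_{\mathbb{D}} \ell\bigl(\phi(z)\bigr)\,|f(z)|^2\,dA_\gamma(z)
\]
is a nonpositive integrand against a positive measure, forcing $\ell\circ\phi\equiv 0$ on $\{f\neq 0\}$; holomorphicity of $f$ makes this set open and dense, and continuity of $\phi$ upgrades the identity to all of $\mathbb{D}$. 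In the $2$-dimensional case this contradicts the dimension of $K$, and in the $1$-dimensional case $c$ is an endpoint of the interval $K$ and the same sign analysis collapses $\phi$ to the constant $c$, contradicting non-constancy. The main obstacle is the structural dichotomy in Step~2 — the Jacobian case analysis combined with the rigidity fact that $h'/g'$ must be a unimodular constant when $|h'|\equiv|g'|$ — since the remaining steps reduce to standard numerical range and supporting functional reasoning.
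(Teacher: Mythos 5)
First, note that the paper does not actually prove Lemma \ref{lm_121}: it is imported verbatim from \cite[Th. 3.6]{Sen_2025}, so there is no in-paper argument to compare yours against. Judged on its own terms, your three-containment architecture is sound, and Steps 1 and 3 are correct: the barycenter computation gives $W(T_\phi)\subseteq K$, and the supporting-functional argument (the integrand is nonpositive, $\{f\neq 0\}$ has full measure, continuity upgrades to all of $\mathbb{D}$, and non-constancy is used exactly where needed) correctly excludes the relative boundary.

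The weak point is the middle of Step 2. The inference ``$\phi(\mathbb{D})$ contains an open set, hence $\mathrm{conv}(\phi(\mathbb{D}))$ is open'' is false for general sets --- take the open unit disk together with a single far-away point: the hull contains that extreme point but no neighbourhood of it --- and the unspecified ``one-factor-at-a-time averaging'' cannot repair this without some input keeping $\phi(\mathbb{D})$ away from the supporting lines of $K$, which for harmonic $\phi$ is the maximum principle applied to the real harmonic functions $\ell\circ\phi$, a tool you never invoke in Step 2. Fortunately the entire Jacobian dichotomy is unnecessary: for $\mathrm{Rel\ int}(K)\subseteq W(T_\phi)$ you only need $\mathrm{Rel\ int}(K)\subseteq \mathrm{conv}(\phi(\mathbb{D}))$, and since $K=\overline{\mathrm{conv}(\phi(\mathbb{D}))}$ this is the standard convexity fact that a convex set and its closure share the same relative interior; the reverse containment $\mathrm{conv}(\phi(\mathbb{D}))\subseteq\mathrm{Rel\ int}(K)$ is already delivered by combining your Steps 1 and 3. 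With that substitution the proof closes; as written, Step 2 rests on an invalid general principle.
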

	
Using Lemma \ref{lm_121}, the following corollary follows directly from Theorem \ref{WBT1}, which establishes a clear relationship between the numerical range and the Berezin range of Toeplitz operators with harmonic symbol.
	\begin{cor}
		Let $\phi \in L^{\infty}(\mathbb{D}, dA_{\gamma})$ be a non-constant harmonic function on $\mathbb{D},$ then $W(T_{\phi})=$ $Rel$ $int$ $\overline{\textbf{Ber}(T_{\phi})}^{\wedge}.$
	\end{cor}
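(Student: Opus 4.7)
The plan is essentially a one-line chain of substitutions, combining the two results the author has just placed at our disposal. Theorem \ref{WBT1} identifies the Berezin range of $T_\phi$ with the image $\phi(\mathbb{D})$, while Lemma \ref{lm_121} expresses $W(T_\phi)$ as the relative interior of the closed convex hull of that same image. So the corollary should fall out by transporting the Berezin range into the formula for the numerical range.

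Concretely, I would proceed in three short steps. First, invoke Theorem \ref{WBT1} to write $\textbf{Ber}(T_\phi)=\phi(\mathbb{D})$. Second, observe that closure and convex hull are operations on sets, hence
$$\overline{\textbf{Ber}(T_\phi)}^{\wedge}=\overline{\phi(\mathbb{D})}^{\wedge},$$
and therefore their relative interiors agree. Third, apply Lemma \ref{lm_121} to replace $\mathrm{Rel}\ \mathrm{int}\ \overline{\phi(\mathbb{D})}^{\wedge}$ by $W(T_\phi)$, which yields the claim.

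The only point that deserves a brief mention is well-posedness: the non-constancy hypothesis on $\phi$ ensures that $\phi(\mathbb{D})$ (and hence $\textbf{Ber}(T_\phi)$) is not a singleton, so $\overline{\textbf{Ber}(T_\phi)}^{\wedge}$ is a nondegenerate convex subset of $\mathbb{C}$ with nonempty relative interior, which makes Lemma \ref{lm_121} applicable. Beyond this sanity check there is no real obstacle; the corollary is purely a restatement of Lemma \ref{lm_121} in the Berezin-range language made available by Theorem \ref{WBT1}.
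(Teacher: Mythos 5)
Your proposal is correct and is exactly the paper's intended argument: the paper gives no separate proof, stating only that the corollary ``follows directly from Theorem \ref{WBT1}'' once Lemma \ref{lm_121} is in hand, which is precisely your substitution of $\textbf{Ber}(T_{\phi})=\phi(\mathbb{D})$ into the formula $W(T_{\phi})=\mathrm{Rel}\ \mathrm{int}\ \overline{\phi(\mathbb{D})}^{\wedge}$. Your added remark on non-constancy guaranteeing a nondegenerate hull is a harmless sanity check that the paper leaves implicit.
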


    The following result characterizes normal Toeplitz operators with harmonic symbol in terms of their Berezin range, and it follows directly from Theorem \ref{WBT1} together with the next lemma.
	
	\begin{lemma}\cite[Cor. 7.28]{Zhu_book}\label{lk_008}
		Let $\phi$ be a bounded harmonic function on $\mathbb{D}.$ Then $T_{\phi}$ is normal on $A^2_{\gamma}(\mathbb{D})$ if and only if $\phi(\mathbb{D})$ lies on a straight line in $\mathbb{C}.$
	\end{lemma}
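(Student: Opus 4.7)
The plan is to prove both directions of the biconditional separately, with sufficiency essentially immediate and necessity requiring an analysis of the self-commutator via Hankel operators. For sufficiency, if $\phi(\mathbb{D})$ lies on a straight line, one may parametrize $\phi = \alpha u + \beta$ for constants $\alpha, \beta \in \mathbb{C}$ and a real-valued bounded harmonic function $u$ on $\mathbb{D}$. Since $u$ is real, $T_u$ is self-adjoint, so $T_\phi = \alpha T_u + \beta I$ is a first-degree polynomial in a self-adjoint operator and hence normal.

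For the necessity, I would decompose the harmonic symbol as $\phi = f + \overline g$ with $f, g$ holomorphic on $\mathbb{D}$ (normalizing $g(0)=0$). Applying the algebraic identities $T_\psi T_h = T_{\psi h}$ and $T_{\overline h} T_\psi = T_{\overline h \psi}$, valid whenever $h$ is bounded holomorphic, a direct expansion collapses the self-commutator to
\[
[T_\phi^{\ast}, T_\phi] = H_{\overline f}^{\ast} H_{\overline f} - H_{\overline g}^{\ast} H_{\overline g},
\]
where $H_\psi=(I-P_\gamma)M_\psi|_{A_\gamma^2(\mathbb{D})}$ denotes the Hankel operator. Normality of $T_\phi$ therefore amounts to $\|H_{\overline f} h\| = \|H_{\overline g} h\|$ for every $h \in A_\gamma^2(\mathbb{D})$. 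Testing this equality on normalized reproducing kernels $\hat k_w^\gamma$ and using the formula $T_{\overline h}\,\hat k_w^\gamma = \overline{h(w)}\,\hat k_w^\gamma$ for holomorphic $h$, together with Theorem \ref{WBT1} to collapse the harmonic cross-terms, yields the pointwise identity
\[
\widetilde{|f|^{2}}(w) - |f(w)|^{2} = \widetilde{|g|^{2}}(w) - |g(w)|^{2} \qquad (w\in\mathbb{D}).
\]

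The concluding step is to convert this integral equality into a derivative-level identity. A M\"obius change of variable $\zeta=\phi_w(z)$ rewrites the left-hand side as $\|f\circ \phi_w - f(w)\|_{A_\gamma^2}^2$, the squared norm of a holomorphic function vanishing at $0$. Isolating the linear term of the Taylor expansion --- through an invariant-Laplacian argument or by differentiating at $w$ and comparing jets --- yields $|f'(w)| = |g'(w)|$ on $\mathbb{D}$. Then $g'/f'$ is a holomorphic function of constant modulus one, hence a unimodular constant $c$ by the open mapping theorem; integrating gives $g = cf + d$, and writing $c = e^{-2i\theta}$ a short computation shows $\phi = f + \overline{cf+d}$ takes values on the line $\overline d + e^{-i\theta}\mathbb{R}$.

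The principal obstacle I anticipate is the derivative-extraction step: passing from the integral equality of Berezin transforms to the pointwise equality $|f'| \equiv |g'|$ requires isolating the linear coefficient of the Taylor expansion of $f \circ \phi_w$ cleanly from the contributions of higher-order derivatives, and the invariant-Laplacian or jet-expansion calculation needed to do this (which is weight-dependent on $\gamma$) is the most technically delicate part of the argument.
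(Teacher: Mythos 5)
First, a point of comparison: the paper does not prove this lemma at all --- it is imported verbatim from Zhu's book \cite[Cor.\ 7.28]{Zhu_book}, where it is obtained as a corollary of the Axler--\v{C}u\v{c}kovi\'c-type theorem on commuting Toeplitz operators with harmonic symbols. So there is no in-paper proof to measure yours against; what you have written is an outline of the standard direct proof. Your sufficiency direction is complete and correct. In the necessity direction, the identity $[T_\phi^{*},T_\phi]=H_{\overline f}^{*}H_{\overline f}-H_{\overline g}^{*}H_{\overline g}$, the evaluation on reproducing kernels giving $\widetilde{|f|^{2}}-|f|^{2}=\widetilde{|g|^{2}}-|g|^{2}$, and the endgame from $|f'|\equiv|g'|$ to $g=cf+d$ and thence to a line are all sound, modulo the routine caveat that $f$ and $g$ need not be bounded even though $\phi$ is, so the product formulas $T_{\overline f}T_f=T_{|f|^{2}}$ etc.\ must be justified on a dense subspace ($f,g$ are Bloch, which suffices).

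The genuine gap is exactly the step you flag and leave undone: passing from the integral identity to $|f'(w)|=|g'(w)|$. ``Isolating the linear term of the Taylor expansion'' does not work as stated: writing $f\circ\phi_w(z)=\sum_n a_nz^{n}$ one gets $\widetilde{|f|^{2}}(w)-|f(w)|^{2}=\sum_{n\ge1}|a_n|^{2}\|z^{n}\|_\gamma^{2}$, a single scalar identity for each $w$ in which every coefficient, not just $a_1$, carries a factor of $(1-|w|^{2})$ (e.g.\ $a_2=\tfrac12 f''(w)(1-|w|^{2})^{2}-\overline{w}f'(w)(1-|w|^{2})$), so neither a jet comparison at $w$ nor a boundary limit cleanly extracts $|a_1|$. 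The standard way to close this is to rewrite the identity as $\widetilde{u}=u$ for $u=|f|^{2}-|g|^{2}$ and invoke the theorem that suitably integrable fixed points of the Berezin transform on the disc are harmonic --- precisely Engli\v{s}'s paper \cite{E_JFA_1994} already in the bibliography --- whence $\Delta u=4(|f'|^{2}-|g'|^{2})=0$; one must then verify that $u$ falls under the hypotheses of that theorem (it is in $L^{1}(dA_\gamma)$ since $f,g$ are Bloch, but this check, and its dependence on $\gamma$, is not automatic). Alternatively, and closer to Zhu's actual route, deduce the statement from the commuting theorem for harmonic symbols applied to the pair $(\phi,\overline{\phi})$. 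As it stands, your argument is a correct skeleton whose load-bearing step is unproved.
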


	\begin{cor}
		Let $\phi$ be a bounded harmonic function on $\mathbb{D}.$ Then $T_{\phi}$ is normal on $A^2_{\gamma}(\mathbb{D})$ if and only if $\textbf{Ber}(T_{\phi})$ lies on a straight line in $\mathbb{C}.$
	\end{cor}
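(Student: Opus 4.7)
The plan is to chain together the two ingredients already in place: Theorem \ref{WBT1}, which identifies $\textbf{Ber}(T_{\phi})$ with the range $\phi(\mathbb{D})$ whenever the symbol $\phi$ is harmonic, and Lemma \ref{lk_008}, which characterizes normality of $T_{\phi}$ by the condition that $\phi(\mathbb{D})$ lies on a straight line in $\mathbb{C}$. Since the hypothesis of the corollary (bounded harmonic symbol) is exactly the intersection of the hypotheses of these two results, the proof should be essentially a one-line substitution.

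Concretely, I would proceed as follows. First, observe that by boundedness of $\phi$ we have $\phi \in L^{\infty}(\mathbb{D}, dA_{\gamma})$, so Theorem \ref{WBT1} applies and yields the equality $\textbf{Ber}(T_{\phi}) = \phi(\mathbb{D})$. Next, apply Lemma \ref{lk_008}: $T_{\phi}$ is normal on $A^{2}_{\gamma}(\mathbb{D})$ if and only if $\phi(\mathbb{D})$ lies on a straight line in $\mathbb{C}$. Substituting the set-equality from the first step into this equivalence replaces $\phi(\mathbb{D})$ with $\textbf{Ber}(T_{\phi})$, giving exactly the statement of the corollary.

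There is really no obstacle: the result is an immediate corollary, so the only thing to be careful about is making sure the hypotheses of Theorem \ref{WBT1} and Lemma \ref{lk_008} are simultaneously in force. Boundedness of $\phi$ on $\mathbb{D}$ gives $\|\phi\|_{\infty,\gamma} < \infty$, and harmonicity is shared by both results, so both may be invoked without further conditions. No separate treatment of the constant symbol case is needed, since in that case $\phi(\mathbb{D})$ is a single point (trivially lying on a straight line) and $T_{\phi}$ is a scalar multiple of the identity (hence normal), so the equivalence holds trivially.
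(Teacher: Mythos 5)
Your proposal is correct and matches the paper exactly: the corollary is stated there as a direct consequence of Theorem \ref{WBT1} combined with Lemma \ref{lk_008}, which is precisely the substitution you perform. No further comment is needed.
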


Next, we show that for any non-empty, bounded, and simply connected region $G$, there exists a corresponding Toeplitz operator whose Berezin range is equal to $G.$

\begin{cor}
    Let $G$ be a non-empty, bounded and simply connected region in $\mathbb C$. Then for any $\gamma>-1$ there exists a Toeplitz operator $T_\phi$ on $A_\gamma^2(\mathbb D)$ such that 
    $\textbf{Ber}(T_\phi)=G.$
\end{cor}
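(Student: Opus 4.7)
The plan is to produce a bounded harmonic function $\phi$ on $\mathbb{D}$ whose image is exactly $G$, and then invoke Theorem \ref{WBT1} directly. Since $G$ is a non-empty, bounded, simply connected region in $\mathbb{C}$ (hence a proper subset of $\mathbb{C}$), the Riemann mapping theorem provides a conformal bijection $\phi \colon \mathbb{D} \to G$.

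First, I would verify that this $\phi$ is an admissible symbol. Being biholomorphic, $\phi$ is holomorphic on $\mathbb{D}$, so in particular it is harmonic on $\mathbb{D}$. The boundedness of $G$ gives $\|\phi\|_\infty \leq \sup_{w \in G}|w| < \infty$, and since $dA_\gamma$ is a finite measure on $\mathbb{D}$, bounded functions lie in $L^\infty(\mathbb{D}, dA_\gamma)$. Therefore $\phi \in L^\infty(\mathbb{D}, dA_\gamma)$ is harmonic on $\mathbb{D}$, so the Toeplitz operator $T_\phi$ is well-defined and bounded on $A_\gamma^2(\mathbb{D})$.

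Next, I would invoke Theorem \ref{WBT1} to conclude $\textbf{Ber}(T_\phi) = \phi(\mathbb{D})$. Since $\phi$ is a bijection from $\mathbb{D}$ onto $G$, the image $\phi(\mathbb{D})$ equals $G$, which yields $\textbf{Ber}(T_\phi) = G$ as required.

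There is no substantial obstacle here; the statement is essentially a corollary of Theorem \ref{WBT1} combined with the Riemann mapping theorem. The only point worth flagging is the implicit assumption that ``region'' means an open connected set in $\mathbb{C}$, together with the harmless fact that $G \neq \mathbb{C}$ (guaranteed by boundedness), which is exactly what is needed for Riemann mapping to apply.
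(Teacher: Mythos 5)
Your proof is correct and follows exactly the same route as the paper's: apply the Riemann mapping theorem to obtain a conformal (hence harmonic and bounded) map $\phi$ of $\mathbb{D}$ onto $G$, then invoke Theorem \ref{WBT1} to conclude $\textbf{Ber}(T_\phi)=\phi(\mathbb{D})=G$. Your version is slightly more careful in checking that $\phi\in L^\infty(\mathbb{D},dA_\gamma)$ and is harmonic, but the argument is the same.
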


\begin{proof}
    Since $G$ is a non-empty, bounded and simply connected region in $\mathbb C,$ then by Riemann mapping theorem there exists a conformal map $\phi$ with $\phi(\mathbb D)=G.$ As $\phi$ is bounded on $\mathbb D$ so by Theorem \ref{WBT1}, we have $\textbf{Ber}(T_\phi)=\phi(\mathbb D)=G.$
\end{proof}

%This section concludes with the open question:
%\begin{question}
%Which non-empty subsets of 
%$\mathbb C$ can be the Berezin range of Toeplitz operators on weighted Bergman spaces?
%\end{question}

\section{Weyl-type operator on weighted Bergman spaces}

	The study of isometric and unitary weighted composition operators on reproducing kernel Hilbert spaces of holomorphic functions has received considerable attention over the years.
	In this section, we introduce a new class of operators in the space $A^2_{\gamma}(\mathbb{D}),$ which extends the concept of Weyl-type unitary operators previously considered in \cite{C_PAMS_2012}. 
Let $\psi_{\beta, \eta}$ be the automorphism of the unit disc $\mathbb D$ defined by $\psi_{\beta, \eta}(w)=\eta\frac{w-\beta}{1-\bar{\beta}w},$ where $\eta\in\mathbb T$ and $\beta\in\mathbb D.$ 
	We introduce the operator $C_{\hat k_\beta^{\gamma}, \psi_{\beta, \eta}}$ on $A^2_{\gamma}(\mathbb{D})$ defined as $$C_{\hat k_\beta^{\gamma}, \psi_{\beta, \eta}}(f)=\hat k_\beta^{\gamma}(f\circ\psi_{\beta, \eta})\,\,\,\,\forall f\in A^2_{\gamma}(\mathbb{D}).$$
    
	We begin by proving that the operator $C_{\hat k_\beta^{\gamma}, \psi_{\beta, \eta}}$ is unitary on the space $A^2_{\gamma}(\mathbb{D}).$
	\begin{proposition}
		$C_{\hat k_\beta^{\gamma}, \psi_{\beta, \eta}}$ is an unitary operator on $A^2_{\gamma}(\mathbb{D}).$
		
	\end{proposition}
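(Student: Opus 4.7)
The plan is a clean two-step argument in the spirit of the standard unitarity proof for composition operators associated with disc automorphisms. First, I would show that $C_{\hat k^\gamma_\beta,\psi_{\beta,\eta}}$ is an isometry on $A^2_\gamma(\mathbb D)$ via a change-of-variables computation under the automorphism $\psi_{\beta,\eta}$, and then I would exhibit a two-sided inverse of the same form, which together force the operator to be a surjective isometry and hence unitary.

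For the isometry step, the heart of the matter is the pair of identities
\[
\psi'_{\beta,\eta}(w)=\eta\,\frac{1-|\beta|^2}{(1-\bar\beta w)^2},\qquad 1-|\psi_{\beta,\eta}(w)|^2=\frac{(1-|\beta|^2)(1-|w|^2)}{|1-\bar\beta w|^2},
\]
which, combined with $|\eta|=1$, yield the measure-transformation rule
\[
dA_\gamma\bigl(\psi_{\beta,\eta}(w)\bigr)=\bigl|\hat k^\gamma_\beta(w)\bigr|^2\,dA_\gamma(w).
\]
Substituting $z=\psi_{\beta,\eta}(w)$ in $\|C_{\hat k^\gamma_\beta,\psi_{\beta,\eta}}f\|^2=\int_{\mathbb D}|\hat k^\gamma_\beta(w)|^2|f(\psi_{\beta,\eta}(w))|^2\,dA_\gamma(w)$ then collapses to $\int_{\mathbb D}|f(z)|^2\,dA_\gamma(z)=\|f\|^2$, giving isometry (and boundedness for free).

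For surjectivity, I would first verify directly that $\psi_{\beta,\eta}^{-1}=\psi_{-\eta\beta,\bar\eta}$, and then consider the companion operator $S=C_{\hat k^\gamma_{-\eta\beta},\psi_{-\eta\beta,\bar\eta}}$. Expanding $C_{\hat k^\gamma_\beta,\psi_{\beta,\eta}}\circ S$ on a function $f$ produces $\hat k^\gamma_\beta(w)\cdot\hat k^\gamma_{-\eta\beta}(\psi_{\beta,\eta}(w))\cdot f(w)$, and a short computation using
\[
1+\bar\eta\bar\beta\,\psi_{\beta,\eta}(w)=1+\bar\beta\cdot\frac{w-\beta}{1-\bar\beta w}=\frac{1-|\beta|^2}{1-\bar\beta w}
\]
shows that the weight prefactor equals $1$ (the principal branches match because $(1-\bar\beta w)(1+\bar\eta\bar\beta\psi_{\beta,\eta}(w))=1-|\beta|^2>0$ and each factor lies in the right half-plane). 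Thus $C_{\hat k^\gamma_\beta,\psi_{\beta,\eta}}S=I$, and by symmetry $SC_{\hat k^\gamma_\beta,\psi_{\beta,\eta}}=I$, so the operator has a two-sided inverse and is onto.

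The main obstacle is purely bookkeeping: one must identify the correct parameters for the inverse automorphism and then chase the $\eta$-rotations and $(1-\bar\beta w)$ factors so that the product $\hat k^\gamma_\beta(w)\,\hat k^\gamma_{-\eta\beta}(\psi_{\beta,\eta}(w))$ collapses exactly to $1$ (not merely to a unimodular constant). Once this cocycle-type identity is in hand, both the isometry (via the Jacobian) and the inversion (via weight cancellation) drop out at once, confirming that $\hat k^\gamma_\beta$ is precisely the right "twist" of $C_{\psi_{\beta,\eta}}$ to produce a unitary on $A^2_\gamma(\mathbb D)$.
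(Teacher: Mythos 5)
Your proof is correct, and the isometry step is exactly the paper's: both use the identities $\psi_{\beta,\eta}'(w)=\eta(1-|\beta|^2)/(1-\bar\beta w)^2$ and $1-|\psi_{\beta,\eta}(w)|^2=(1-|\beta|^2)(1-|w|^2)/|1-\bar\beta w|^2$ to see that the change of variables $z=\psi_{\beta,\eta}(w)$ carries $|\hat k^\gamma_\beta(w)|^2\,dA_\gamma(w)$ to $dA_\gamma(z)$. Where you differ is the surjectivity step. The paper simply writes down a candidate preimage $f(w)=g(\psi_{\beta,\eta}(w))/\hat k^\gamma_\beta(\psi_{\beta,\eta}(w))$ and asserts that $C_{\hat k^\gamma_\beta,\psi_{\beta,\eta}}f=g$; as literally written this requires $\psi_{\beta,\eta}\circ\psi_{\beta,\eta}=\mathrm{id}$, which fails for general $\eta$, so the intended formula should involve $\psi_{\beta,\eta}^{-1}$ rather than $\psi_{\beta,\eta}$. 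You instead identify $\psi_{\beta,\eta}^{-1}=\psi_{-\eta\beta,\bar\eta}$ explicitly and verify the cocycle identity $\hat k^\gamma_\beta(w)\,\hat k^\gamma_{-\eta\beta}(\psi_{\beta,\eta}(w))=1$ (your computation $1+\bar\eta\bar\beta\,\psi_{\beta,\eta}(w)=(1-|\beta|^2)/(1-\bar\beta w)$ is right, and your branch check is needed precisely because $\gamma+2$ need not be an integer), obtaining a two-sided inverse of the same weighted-composition form. This costs a little more bookkeeping but buys something real: it repairs the small slip in the paper's preimage formula, shows that the class of Weyl-type operators is closed under inversion, and makes the unitarity conclusion immediate from $C S=S C=I$ together with the isometry property.
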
	
	\begin{proof}
		First we prove that the operator $C_{\hat k_\beta^{\gamma}, \psi_{\beta, \eta}}$ is an isometry on $A^2_{\gamma}(\mathbb{D}).$
		\begin{eqnarray*}
			\|C_{\hat k_\beta^{\gamma}, \psi_{\beta, \eta}} f\|^2&=&\int_{\mathbb D}|\hat k_\beta^{\gamma}(w)|^2|f(\psi_{\beta, \eta}(w))|^2dA_{\gamma}(w)\\
			&=& \int_{\mathbb D}|f(\psi_{\beta, \eta}(w))|^2\frac{(1-|\beta|^2)^{2+\gamma}}{|1-\bar{\beta}w|^{2(\gamma+2)}}dA_{\gamma}(w)\\
			&=& \int_{\mathbb D}|f(w)|^2dA_{\gamma}(w)=\|f\|^2.
		\end{eqnarray*}
		Now, we have to show that $C_{\hat k_\beta^{\gamma}, \psi_{\beta, \eta}}$ is onto, i.e., for any $g\in A^2_{\gamma}(\mathbb{D})$ there exists $f\in A^2_{\gamma}(\mathbb{D})$ such that $C_{\hat k_\beta^{\gamma}, \psi_{\beta, \eta}} f=g.$ For $w\in\mathbb D,$ if we consider $f(w)=\frac{g(\psi_{\beta, \eta}(w))}{\hat k_\beta^{\gamma}(\psi_{\beta, \eta}(w))}$ then it is easy to observe that $f\in A^2_{\gamma}(\mathbb{D})$ and satisfies $C_{\hat k_\beta^{\gamma}, \psi_{\beta, \eta}} f=g.$ This completes the proof.

	\end{proof}

	The Berezin transform of $C_{\hat k_\beta^{\gamma}, \psi_{\beta, \eta}}$ on $A^2_{\gamma}(\mathbb{D})$ is given by 
	
	\begin{eqnarray}
		\widetilde{C_{\hat k_\beta^{\gamma}, \psi_{\beta, \eta}}}(\xi)=\langle C_{\hat k_\beta^{\gamma}, \psi_{\beta, \eta}}\hat k_{\xi}^{\gamma}, \hat k_{\xi}^{\gamma} \rangle
		&=&(1-|\beta|^2)^{\frac{\gamma+2}{2}}(1-|\xi|^2)^{\gamma+2} k_{\beta}^{\gamma}(\xi) k_{\xi}^{\gamma}(\psi_{\beta, \eta}(\xi))\nonumber\\
		&=&\frac{(1-|\beta|^2)^{\frac{\gamma+2}{2}}(1-|\xi|^2)^{\gamma+2}}{(1-\bar{\beta}\xi)^{\gamma+2}(1-\bar{\xi}\psi_{\beta, \eta}(\xi))^{\gamma+2}}\nonumber\\
		&=&\left(\frac{(1-|\beta|^2)(1-|\xi|^2)^2}{\left((1-\eta|\xi|^2)-(\xi\bar{\beta}-\eta\bar{\xi}\beta)\right)^2}\right)^{\frac{\gamma+2}{2}}.
	\end{eqnarray}

    Next, we verify that although $0$ lies in the closure of the Berezin range of $C_{\hat k_\beta^{\gamma}, \psi_{\beta, \eta}},$  it does not belong to the range itself.
	\begin{proposition}\label{prop_9}
            For the Weyl-type unitary operator $C_{\hat k_\beta^{\gamma}, \psi_{\beta, \eta}}$ on $A^2_{\gamma}(\mathbb{D}),$ 
            $$0\in\overline{\textbf{Ber}(C_{\hat k_\beta^{\gamma}, \psi_{\beta, \eta}})}\setminus\textbf{Ber}(C_{\hat k_\beta^{\gamma}, \psi_{\beta, \eta}}).$$
	\end{proposition}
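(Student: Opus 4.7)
The plan is to work directly from the explicit formula for the Berezin transform displayed just above the statement. The key structural observation is that the denominator factors as $\bigl((1-\bar{\beta}\xi)(1-\bar{\xi}\psi_{\beta,\eta}(\xi))\bigr)^{\gamma+2}$, so the value of the Berezin transform is controlled by three quantities: the strictly positive real numerator $(1-|\beta|^2)^{(\gamma+2)/2}(1-|\xi|^2)^{\gamma+2}$ and the two factors $1-\bar{\beta}\xi$ and $1-\bar{\xi}\psi_{\beta,\eta}(\xi)$ in the denominator.

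To rule out $0 \in \textbf{Ber}(C_{\hat k_\beta^{\gamma}, \psi_{\beta, \eta}})$, I would argue that for every $\xi \in \mathbb D$ the numerator is strictly positive (since $|\beta|,|\xi|<1$), while the denominator is finite and nonzero: $|1-\bar\beta\xi| > 0$ because $|\bar\beta\xi| < 1$, and $|1-\bar\xi\psi_{\beta,\eta}(\xi)| > 0$ because $\psi_{\beta,\eta}$ is an automorphism of $\mathbb D$ and hence $|\bar\xi\psi_{\beta,\eta}(\xi)| < 1$. Therefore $\widetilde{C_{\hat k_\beta^{\gamma}, \psi_{\beta, \eta}}}(\xi)\neq 0$ for every $\xi\in\mathbb{D}$.

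For the inclusion $0 \in \overline{\textbf{Ber}(C_{\hat k_\beta^{\gamma}, \psi_{\beta, \eta}})}$, I would construct a sequence $\xi_n \in \mathbb D$ along which the numerator collapses faster than the denominator. Concretely, I would pick $\theta_0 \in [0,2\pi)$ such that $e^{i\theta_0}$ is not a fixed point of $\psi_{\beta,\eta}$ on $\partial\mathbb D$ — this is possible in every nontrivial case since $\psi_{\beta,\eta}$, being a Möbius automorphism of $\mathbb D$, admits at most two fixed points on $\overline{\mathbb D}$ — and then set $\xi_n = r_n e^{i\theta_0}$ with $r_n \uparrow 1$. Then $(1-|\xi_n|^2)^{\gamma+2} \to 0$, while $1-\bar\beta\xi_n \to 1-\bar\beta e^{i\theta_0} \neq 0$ and $1-\bar\xi_n\psi_{\beta,\eta}(\xi_n) \to 1-e^{-i\theta_0}\psi_{\beta,\eta}(e^{i\theta_0}) \neq 0$. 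Substituting into the explicit formula for $\widetilde{C_{\hat k_\beta^{\gamma}, \psi_{\beta, \eta}}}$ immediately yields $\widetilde{C_{\hat k_\beta^{\gamma}, \psi_{\beta, \eta}}}(\xi_n) \to 0$.

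The main subtlety is the selection of $\theta_0$: one must avoid the (at most two) boundary fixed points of $\psi_{\beta,\eta}$, because approaching a fixed point would cause the factor $1-\bar\xi_n\psi_{\beta,\eta}(\xi_n)$ to vanish at the same rate as $1-|\xi_n|^2$, producing a nonzero limit. Once $\theta_0$ is chosen outside this finite exceptional set, the boundary limit of the denominator is a strictly positive constant and the conclusion follows immediately from the asymptotics of the numerator.
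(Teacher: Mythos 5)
Your argument is correct and follows essentially the same route as the paper: the paper also shows $0\notin\textbf{Ber}$ by bounding the denominator (using $|1-\bar\beta\xi|\le 2$ and $|1-\bar\xi\psi_{\beta,\eta}(\xi)|\le 2$ to get an explicit positive lower bound, where you simply observe the denominator is nonzero pointwise), and it also obtains $0\in\overline{\textbf{Ber}}$ by letting $\xi$ approach radially a boundary point that is not fixed by $\psi_{\beta,\eta}$, exactly as in your choice of $\theta_0$. The only caveat, which both you and the paper handle only implicitly, is that the claim fails in the degenerate case $\beta=0$, $\eta=1$ where $\psi_{\beta,\eta}$ is the identity.
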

	\begin{proof}
           For any $\xi\in \mathbb D,$ we have 
     \begin{eqnarray}\label{eq_422}
                   \widetilde{C_{\hat k_\beta^{\gamma}, \psi_{\beta, \eta}}}(\xi)
		&&=\frac{(1-|\beta|^2)^{\frac{\gamma+2}{2}}(1-|\xi|^2)^{\gamma+2}}{(1-\bar{\beta}\xi)^{\gamma+2}(1-\bar{\xi}\psi_{\beta, \eta}(\xi))^{\gamma+2}}.
    \end{eqnarray}

	    Since $\psi_{\beta, \eta}$ is not an identity map on $\mathbb{D}$, there exists $\xi_0 \in \mathbb{T}$ such that the radial limit of $\psi_{\beta, \eta}$ exists at $\xi_0$ with $\psi_{\beta, \eta}(\xi_0) \neq \xi_0.$ Therefore, from \eqref{eq_422} we get $\widetilde{C_{\hat k_\beta^{\gamma}, \psi_{\beta, \eta}}}(\xi) \to 0$ when  $\xi$ radially approaches to the point $\xi_0.$ Hence, we obtain $0 \in \overline{\textbf{Ber}(C_{\hat k_\beta^{\gamma}, \psi_{\beta, \eta}})}.$
		
		Again, from \eqref{eq_422} we get
		\begin{align*}
			\left|\widetilde{C_{\hat k_\beta^{\gamma}, \psi_{\beta, \eta}}}(\xi)\right|&=\frac{(1-|\beta|^2)^{\frac{\gamma+2}{2}}(1-|\xi|^2)^{\gamma+2}}{|1-\bar{\beta}\xi|^{\gamma+2}|1-\bar{\xi}\psi_{\beta, \eta}(\xi)|^{\gamma+2}}\\
            &\geq \frac{(1-|\beta|^2)^{\frac{\gamma+2}{2}}(1-|\xi|^2)^{\gamma+2}}{2^{2{(\gamma+2)}}}>0\,\,\text{for all $\xi \in \mathbb{D}$}.
		\end{align*}
		Therefore, we obtain $\widetilde{C_{\hat k_\beta^{\gamma}, \psi_{\beta, \eta}}}(\xi) \neq 0$ for all $\xi \in \mathbb{D},$ as desired.
	\end{proof}

    Now, we proceed to obtain the Berezin range of the operator $C_{\hat k_\beta^{\gamma}, \psi_{\beta, \eta}}$ on $A^2_{\gamma}(\mathbb{D})$ for $\eta=-1$.
     \begin{theorem}
         For the Weyl-type unitary operator $C_{\hat k_\beta^{\gamma}, \psi_{\beta,-1}}$ on $A^2_{\gamma}(\mathbb{D}),$ $\textbf{Ber}\left(C_{\hat k_\beta^{\gamma}, \psi_{\beta,-1}}\right)=(0,1].$
     \end{theorem}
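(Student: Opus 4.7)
The plan is to leverage the closed-form expression for $\widetilde{C_{\hat k_\beta^{\gamma}, \psi_{\beta, -1}}}(\xi)$ obtained by specializing the Berezin transform formula (the displayed equation just before Proposition \ref{prop_9}) to $\eta = -1$, then pin down the supremum and infimum, and use continuity to fill in the interval. Setting $\eta=-1$, the bracket inside the base becomes $(1+|\xi|^2) - 2\operatorname{Re}(\bar{\beta}\xi)$, and the identity $(1+|\xi|^2)-2\operatorname{Re}(\bar{\beta}\xi) = |\xi-\beta|^2 + (1-|\beta|^2) > 0$ shows that $\widetilde{C_{\hat k_\beta^{\gamma}, \psi_{\beta, -1}}}$ is a continuous, strictly positive, real-valued function on $\mathbb{D}$. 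Since $C_{\hat k_\beta^\gamma, \psi_{\beta,-1}}$ is unitary (by the proposition already established) and $\|\hat k_\xi^\gamma\|=1$, Cauchy--Schwarz at once gives $\widetilde{C_{\hat k_\beta^{\gamma}, \psi_{\beta, -1}}}(\xi)\le 1$ for all $\xi\in\mathbb D$, so $\textbf{Ber}\bigl(C_{\hat k_\beta^{\gamma}, \psi_{\beta,-1}}\bigr)\subseteq (0,1]$.

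To show that $1$ is actually attained, I would locate the fixed point of $\psi_{\beta,-1}$ inside $\mathbb{D}$. Solving $\psi_{\beta,-1}(w)=w$, i.e.\ $\bar\beta w^2 - 2w + \beta = 0$, yields $w_0 = \frac{1-\sqrt{1-|\beta|^2}}{\bar\beta}$ when $\beta\neq 0$ (and $w_0=0$ when $\beta=0$); a direct check shows $|w_0|<1$ and, crucially, $1-\bar\beta w_0 = \sqrt{1-|\beta|^2}$. Substituting $\xi = w_0$ into the unsimplified form
$$\widetilde{C_{\hat k_\beta^{\gamma}, \psi_{\beta,-1}}}(\xi) = \frac{(1-|\beta|^2)^{(\gamma+2)/2}(1-|\xi|^2)^{\gamma+2}}{(1-\bar\beta\xi)^{\gamma+2}\bigl(1-\bar\xi\,\psi_{\beta,-1}(\xi)\bigr)^{\gamma+2}}$$
and using $\psi_{\beta,-1}(w_0)=w_0$, the factor $1-\bar{w_0}\psi_{\beta,-1}(w_0)$ collapses to $1-|w_0|^2$, cancelling the corresponding factor from the numerator and leaving $\widetilde{C_{\hat k_\beta^{\gamma}, \psi_{\beta,-1}}}(w_0) = (1-|\beta|^2)^{(\gamma+2)/2} / \bigl(\sqrt{1-|\beta|^2}\bigr)^{\gamma+2} = 1$.

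Finally, Proposition \ref{prop_9} already provides that $0\in\overline{\textbf{Ber}(C_{\hat k_\beta^{\gamma}, \psi_{\beta,-1}})}\setminus\textbf{Ber}(C_{\hat k_\beta^{\gamma}, \psi_{\beta,-1}})$, so the positive function $\widetilde{C_{\hat k_\beta^{\gamma}, \psi_{\beta,-1}}}$ gets arbitrarily close to $0$ (along a radial approach to a suitable boundary point) but never equals $0$. Combining these facts, the continuous real-valued map $\xi \mapsto \widetilde{C_{\hat k_\beta^{\gamma}, \psi_{\beta,-1}}}(\xi)$ on the connected set $\mathbb{D}$ has image a subinterval of $\mathbb{R}$, with supremum $1$ (attained at $w_0$) and infimum $0$ (not attained); hence the image is exactly $(0,1]$. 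The one place any ingenuity is needed is the second step---spotting the fixed point $w_0$ and verifying the algebraic identity $1-\bar\beta w_0 = \sqrt{1-|\beta|^2}$ that makes the Berezin value reduce cleanly to $1$; everything else is continuity, connectedness, and a direct appeal to the earlier proposition.
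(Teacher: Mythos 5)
Your proof is correct and follows essentially the same route as the paper: specialize the Berezin transform formula to $\eta=-1$, observe it is real, positive and continuous, evaluate it at the fixed point $\xi(\beta)=\frac{1-\sqrt{1-|\beta|^2}}{\bar\beta}$ to get the value $1$, note the decay to $0$ at the boundary, and conclude by connectedness. The only difference is that you make explicit the upper bound $\widetilde{C_{\hat k_\beta^{\gamma}, \psi_{\beta,-1}}}(\xi)\le 1$ via unitarity and Cauchy--Schwarz, a step the paper leaves implicit but which is genuinely needed to rule out values above $1$.
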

     \begin{proof}
         For $\eta=-1,$ we have
		$$\langle C_{\hat k_\beta^{\gamma}, \psi_{\beta, -1}}\hat k_{\xi}^{\gamma}, \hat k_{\xi}^{\gamma} \rangle=\left(\frac{(1-|\beta|^2)(1-|\xi|^2)^2}{\left((1+|\xi|^2)-(\xi\bar{\beta}+\bar{\xi}\beta)\right)^2}\right)^{\frac{\gamma+2}{2}}.$$
		The equation $\psi_{\beta, -1}(\xi)=\xi$ has a unique solution in $\mathbb D, $ $\xi(\beta)=\frac{1-\sqrt{1-|\beta|^2}}{\bar{\beta}}.$ Thus, by simple computation we get 
        $$\widetilde{ C_{\hat k_\beta^{\gamma}, \psi_{\beta, -1}}}(\xi(\beta))=1.$$
        Since $\widetilde{ C_{\hat k_\beta^{\gamma}, \psi_{\beta, -1}}}$ is a continuous function, it follows that the range of $\widetilde{ C_{\hat k_\beta^{\gamma}, \psi_{\beta, -1}}}$ is a connected subset of $\mathbb R$ that contains 1 and $\widetilde{ C_{\hat k_\beta^{\gamma}, \psi_{\beta, -1}}}(\xi)\to 0$ as $|\xi|\to1.$ 
        Therefore, $\textbf{Ber}\left(C_{\hat k_\beta^{\gamma}, \psi_{\beta,-1}}\right)=(0,1]$.
     \end{proof}

	 Next, we evaluate the Berezin number of the operator $C_{\hat k_\beta^{\gamma}, \psi_{\beta, \eta}}$ on $A^2_{\gamma}(\mathbb{D})$ corresponding to $\eta=1.$
	\begin{theorem}\label{TT1}
		For the Weyl-type unitary operator $C_{\hat k_\beta^{\gamma}, \psi_{\beta, 1}}$ on $A^2_{\gamma}(\mathbb{D}),~~~~\textbf{ber}\left(C_{\hat k_\beta^{\gamma}, \psi_{\beta, 1}}\right)=(1-|\beta|^2)^{\frac{\gamma+2}{2}}.$ 
	\end{theorem}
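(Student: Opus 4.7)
The plan is to combine the explicit Berezin transform formula for $\widetilde{C_{\hat k_\beta^{\gamma}, \psi_{\beta, \eta}}}(\xi)$ obtained in the excerpt (specialized to $\eta=1$) with a short modulus estimate, and then exhibit a single point at which the resulting upper bound is attained. Setting $\eta=1$, the quantity inside the outer exponent of the denominator becomes $(1-|\xi|^2)-(\xi\bar{\beta}-\bar{\xi}\beta)$. The crucial observation I would make is that $\xi\bar{\beta}-\bar{\xi}\beta=\xi\bar{\beta}-\overline{\xi\bar{\beta}}=2i\operatorname{Im}(\xi\bar{\beta})$ is purely imaginary, so this denominator is a complex number with real part $1-|\xi|^2$ and imaginary part $-2\operatorname{Im}(\xi\bar{\beta})$.

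Passing to absolute values then replaces the denominator by $\bigl((1-|\xi|^2)^2+4(\operatorname{Im}(\xi\bar{\beta}))^2\bigr)^{(\gamma+2)/2}$. Since $4(\operatorname{Im}(\xi\bar{\beta}))^2\geq 0$, this immediately yields the bound
$$\left|\widetilde{C_{\hat k_\beta^{\gamma}, \psi_{\beta, 1}}}(\xi)\right|\leq (1-|\beta|^2)^{\frac{\gamma+2}{2}}\quad\text{for every }\xi\in\mathbb{D},$$
and hence $\textbf{ber}\bigl(C_{\hat k_\beta^{\gamma}, \psi_{\beta, 1}}\bigr)\leq (1-|\beta|^2)^{(\gamma+2)/2}$.

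To see that this bound is attained, I would simply plug $\xi=0$ into the Berezin transform formula: the denominator collapses to $1$, and one directly reads off $\widetilde{C_{\hat k_\beta^{\gamma}, \psi_{\beta, 1}}}(0)=(1-|\beta|^2)^{(\gamma+2)/2}$. Taking the supremum over $\xi\in\mathbb{D}$ then yields the claimed equality. I do not expect a genuine obstacle in this argument; essentially the only step requiring care is the algebraic identity that isolates the purely imaginary quantity $\xi\bar{\beta}-\bar{\xi}\beta$, after which both the upper bound and the choice of $\xi=0$ as an attaining point are forced.
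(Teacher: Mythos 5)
Your argument is correct and is essentially identical to the paper's proof: both use the specialization $\eta=1$ of the Berezin transform formula, observe that $\xi\bar{\beta}-\bar{\xi}\beta$ is purely imaginary so that the modulus of the denominator is $\bigl((1-|\xi|^2)^2+|\xi\bar{\beta}-\bar{\xi}\beta|^2\bigr)^{(\gamma+2)/2}\geq (1-|\xi|^2)^{\gamma+2}$, and then check attainment at $\xi=0$. No gaps.
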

	
	\begin{proof}
		We have
		$$\langle C_{\hat k_\beta^{\gamma}, \psi_{\beta, 1}}\hat k_{\xi}^{\gamma}, \hat k_{\xi}^{\gamma} \rangle=\left(\frac{(1-|\beta|^2)(1-|\xi|^2)^2}{\left((1-|\xi|^2)-(\xi\bar{\beta}-\bar{\xi}\beta)\right)^2}\right)^{\frac{\gamma+2}{2}}.$$
		Since $i(\xi\bar{\beta}-\bar{\xi}\beta)$ is real, we get
		$$|(1-|\xi|^2)-(\xi\bar{\beta}-\bar{\xi}\beta)|^2=(1-|\xi|^2)^2+|\xi\bar{\beta}-\bar{\xi}\beta|^2.$$
		So, 
		$$|\langle C_{\hat k_\beta^{\gamma}, \psi_{\beta, 1}}\hat k_{\xi}^{\gamma}, \hat k_{\xi}^{\gamma} \rangle|=\left(\frac{(1-|\beta|^2)(1-|\xi|^2)^2}{(1-|\xi|^2)^2+|\xi\bar{\beta}-\bar{\xi}\beta|^2}\right)^{\frac{\gamma+2}{2}}\leq (1-|\beta|^2)^{\frac{\gamma+2}{2}}. $$
		Since $\widetilde{ C_{\hat k_\beta^{\gamma}, \psi_{\beta, 1}}}(0)=(1-|\beta|^2)^{\frac{\gamma+2}{2}},$ it follows that $\textbf{ber}(C_{\hat k_\beta^{\gamma}, \psi_{\beta, 1}})=(1-|\beta|^2)^{\frac{\gamma+2}{2}}.$\\
		
	\end{proof}

	\begin{cor}
	    The space $\{\widetilde{T}:T\in\mathcal B(A^2_{\gamma}(\mathbb{D})), \|\cdot\|_{\infty}\}$ is a non-closed linear subspace of $\mathcal B \mathcal C (\mathbb D),$ where $\mathcal B \mathcal C (\mathbb D)$ is the space of all bounded continuous functions on $\mathbb D$ and $\mathcal B(A^2_{\gamma}(\mathbb{D}))$ is the space of all bounded linear operators on $A^2_{\gamma}(\mathbb{D}).$
	\end{cor}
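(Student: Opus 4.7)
The plan is to recognize the corollary as a soft consequence of Theorem \ref{TT1} via the open mapping theorem. First I would set up the Berezin transform as a map $\Phi : \mathcal B(A^2_{\gamma}(\mathbb{D})) \to \mathcal B \mathcal C (\mathbb D)$ sending $T \mapsto \widetilde{T}$. Linearity in $T$ is immediate from $\widetilde{T}(\xi)=\langle T\hat k_\xi^{\gamma}, \hat k_\xi^{\gamma}\rangle$, and since $\|\hat k_\xi^{\gamma}\|=1$ we get $|\widetilde{T}(\xi)|\leq\|T\|$, so $\widetilde{T}\in \mathcal{BC}(\mathbb D)$ (continuity is already recorded as real-analyticity in the introduction) and $\|\Phi(T)\|_\infty\leq\|T\|$. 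This simultaneously shows that the image is a linear subspace of $\mathcal B \mathcal C(\mathbb D)$ and that $\Phi$ is bounded. Injectivity of $\Phi$ is the uniqueness property of the Berezin transform cited in the introduction.

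Next I would argue by contradiction for non-closedness. Suppose the image $\Phi(\mathcal B(A^2_{\gamma}(\mathbb D)))$ were closed in $\mathcal B \mathcal C (\mathbb D)$. Then it is a Banach subspace, and $\Phi$ is a bounded linear bijection between the two Banach spaces $\mathcal B(A^2_{\gamma}(\mathbb D))$ and $\Phi(\mathcal B(A^2_{\gamma}(\mathbb D)))$. The open mapping theorem then produces a constant $C>0$ such that
\[
\|T\|\leq C\,\|\widetilde{T}\,\|_\infty=C\,\textbf{ber}(T)\qquad\text{for every }T\in\mathcal B(A^2_{\gamma}(\mathbb D)).
\]

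Finally I would substitute the Weyl-type unitary family $T_\beta=C_{\hat k_\beta^{\gamma},\psi_{\beta,1}}$ of Theorem \ref{TT1} to derive a contradiction. Since $T_\beta$ is unitary we have $\|T_\beta\|=1$ for every $\beta\in\mathbb D$, while Theorem \ref{TT1} computes $\textbf{ber}(T_\beta)=(1-|\beta|^2)^{(\gamma+2)/2}$, which tends to $0$ as $|\beta|\to 1^{-}$. Thus the inequality $1\leq C(1-|\beta|^2)^{(\gamma+2)/2}$ cannot persist, contradicting the existence of $C$.

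The main (and essentially only) obstacle is checking that the hypotheses of the open mapping theorem apply — in particular that $\Phi$ is bounded and injective and that the putative closed image is indeed Banach — after which Theorem \ref{TT1} delivers the contradiction at once. No explicit approximating sequence needs to be constructed, since the failure of a norm-comparison inequality is already enough to defeat closedness.
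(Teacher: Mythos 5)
Your proposal is correct and follows essentially the same route as the paper: both arguments use the boundedness and injectivity of the Berezin transform map, invoke the open mapping theorem under the assumption of a closed image to produce a norm inequality $\|T\|\leq C\,\textbf{ber}(T)$, and then contradict it with the unitary family $C_{\hat k_\beta^{\gamma},\psi_{\beta,1}}$ from Theorem \ref{TT1}. Your write-up is in fact slightly more careful than the paper's in spelling out why injectivity holds and why the closed image is a Banach space.
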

	
	\begin{proof}
	    Since $C_{\hat k_\beta^{\gamma}, \psi_{\beta, 1}}$ is unitary, we have $\|C_{\hat k_\beta^{\gamma}, \psi_{\beta, 1}}\|=1.$ As $\beta\in\mathbb D$ is arbitrary, so Theorem \ref{TT1} implies that $\textbf{ber}\left(C_{\hat k_\beta^{\gamma}, \psi_{\beta, 1}}\right)=(1-|\beta|^2)^{\frac{\gamma+2}{2}}\to 0$ as $|\beta| \to 1$. Therefore, there does not exist any fixed $k>0$ such that $k\textbf{ber}(T)\geq\|T\|$ for all $T\in \mathcal B(A^2_{\gamma}(\mathbb{D})).$ \\
        We consider the function $\phi:\mathcal B(A^2_{\gamma}(\mathbb{D}))\to \mathcal B \mathcal C(\mathbb D)$ defined as $\phi(T)=\widetilde{T}.$ Clearly, $\phi$ is bounded and linear. Suppose that $\{\widetilde{T}:T\in\mathcal B(A^2_{\gamma}(\mathbb{D}))\}$ is closed in $\mathcal B \mathcal C(\mathbb D).$ Then $\phi$ is a one-to-one mapping on $\mathcal B \mathcal C(\mathbb D)$ . The open mapping theorem ensures that there exists a constant $k$ such that $\|T\|\leq k \textbf{ber}(T)$ for all $T\in \mathcal B(A^2_{\gamma}(\mathbb{D})).$ This contradicts the fact that there does not exist any $k>0$ such that $k\textbf{ber}(T)\geq\|T\|$ for all $T\in \mathcal B(A^2_{\gamma}(\mathbb{D})).$ Therefore, $\{\widetilde{T}:T\in\mathcal B(A^2_{\gamma}(\mathbb{D})), \|\cdot\|_{\infty}\}$ is a non-closed linear subspace of $\mathcal B \mathcal C (\mathbb D).$
	\end{proof}

	\section{Convexity of Berezin range of composition operators} 
	
	The conformal mappings $\psi_{\beta, \eta}:\mathbb D\to\mathbb D$ are the functions of the form $\psi_{\beta, \eta}(w)=\eta\frac{w-\beta}{1-\bar{\beta}w},$ where $\eta\in\mathbb T.$ In this section, we study the Berezin ranges of composition operators $C_{\psi_{\beta, \eta}}$ on $A^2_{\gamma}(\mathbb{D})$  induced by $\psi_{\beta, \eta}$ on $\mathbb D.$ In particular, we analyze the convexity of Berezin range of the composition operator $C_{\psi_{\beta, \eta}}$ on $A^2_{\gamma}(\mathbb{D})$ induced by $\psi_{\beta, \eta},$ where $\psi_{\beta, \eta}$ is of elliptic type, and later when $\psi_{\beta, \eta}$ is a Blashke factor.

	The Berezin transform of $C_{\psi_{\beta, \eta}}$ on $A^2_{\gamma}(\mathbb{D})$ is given by
	\begin{align}\label{e1}
		\widetilde{C_{\psi_{\beta, \eta}}}(w)
       % =\langle C_{\psi_{\beta, \eta}}\hat{k}^{\gamma}_w, \hat{k}^{\gamma}_w \rangle
        =\frac{1}{\|k^{\gamma}_w\|^2}\langle C_{\psi_{\beta, \eta}}{k}^{\gamma}_w, {k}^{\gamma}_w \rangle
        =\frac{1}{\|k^{\gamma}_w\|^2}{k}^{\gamma}_w(\psi_{\beta, \eta}(w))
        =\left(\frac{1-|w|^2}{1-\bar{w}\psi_{\beta, \eta}(w)}\right)^{\gamma+2}.
	\end{align}

	We begin by presenting a characterization of the convexity of the Berezin range of the composition operator $C_{\psi_{0, \eta}},$ where $\psi_{0, \eta}$ is of elliptic type. To this end, we first establish a lemma that will be used in the proof of the main theorem. 
	\subsection{Elliptic Symbol}
	Let $\psi_{0, \eta}(w)=\eta w$ be the elliptic automorphism on $\mathbb D,$  where $\eta\in\mathbb T$ and $w\in\mathbb D.$ Then 
	\begin{equation*}
		\widetilde{C_{{\psi_{0, \eta}}}}(w)=\langle C_{{\psi_{0, \eta}}}\hat{k}^{\gamma}_w(w), \hat{k}^{\gamma}_w(w)\rangle
		=\left(\frac{1-|w|^2}{1-\eta |w|^2}\right)^{\gamma+2},
	\end{equation*}
	where $\gamma>-1$.

	\begin{lemma}\label{lm_1}
		Let ${\psi_{0, \eta}}(w)=\eta w$ with $\eta\in\mathbb T$ and $w\in\mathbb D$. Then 
		\begin{align*}
			&(i)~~\textbf{Ber}(C_{{\psi_{0, \eta}}})~~\text{is singleton if and only if}~~\eta=1,\\
			&(ii)~~\Im\{\textbf{Ber}(C_{{\psi_{0, \eta}}})\}=\{0\}~~\text{if and only if}~~\Im\{\eta\}=0.
		\end{align*}
	\end{lemma}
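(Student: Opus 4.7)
The plan is to read both equivalences off the explicit formula
$$\widetilde{C_{\psi_{0,\eta}}}(w)=\left(\frac{1-|w|^2}{1-\eta|w|^2}\right)^{\gamma+2}$$
derived just before the lemma. The two useful facts about this formula are that plugging in $w=0$ gives the value $1$, and that as $w$ ranges, the inner ratio is a continuous curve in $\mathbb{C}$ starting at $1$. Continuity will be the key tool, because $\gamma+2$ is a real (not necessarily integer) exponent and the branch is the one taking value $1$ at $w=0$.

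For (i), the forward implication is immediate: when $\eta=1$, the numerator and denominator coincide, so $\widetilde{C_{\psi_{0,\eta}}}\equiv 1$ and the Berezin range is the singleton $\{1\}$. For the converse, suppose $\textbf{Ber}(C_{\psi_{0,\eta}})$ is a singleton. Evaluating at $w=0$ shows the common value must be $1$, hence $\left(\frac{1-r^2}{1-\eta r^2}\right)^{\gamma+2}=1$ for every real $r\in[0,1)$. The ratio is then a $(\gamma+2)$-th root of unity for every such $r$; but it depends continuously on $r$ and equals $1$ at $r=0$, so by connectedness it equals $1$ throughout. Setting the ratio equal to $1$ forces $\eta r^2=r^2$ for all $r$, i.e., $\eta=1$.

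For (ii), the forward direction is clear: if $\Im\eta=0$, then since $|\eta|=1$ we have $\eta=\pm1$, and in either case $\frac{1-|w|^2}{1-\eta|w|^2}$ is a positive real number, so its $(\gamma+2)$-th power is real. The converse uses the same continuity idea: if every value of $\widetilde{C_{\psi_{0,\eta}}}$ is real, then for each real $r\in[0,1)$ the argument of $\frac{1-r^2}{1-\eta r^2}$ lies in the discrete set $\{k\pi/(\gamma+2):k\in\mathbb Z\}$; continuity in $r$ and the initial value $1$ at $r=0$ pin the argument to $0$, so the ratio is positive real. Thus $1-\eta r^2$ is real for every $r\in[0,1)$, which forces $\Im\eta=0$.

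The only subtlety I anticipate is the handling of the fractional exponent $\gamma+2$ when arguing that ``real $(\gamma+2)$-th power'' or ``unimodular $(\gamma+2)$-th power'' plus continuity forces the base onto the positive real axis; this is the connectedness/branch-selection step mentioned above, and it is the one place where one cannot simply take a $(\gamma+2)$-th root algebraically. Once that observation is made, both equivalences reduce to elementary manipulations with the single real parameter $t=|w|^2$.
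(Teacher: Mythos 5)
Your proof is correct, and its overall strategy coincides with the paper's: reduce everything to the one\hyphenation{vari-able}-variable formula $\bigl(\tfrac{1-r^2}{1-\eta r^2}\bigr)^{\gamma+2}$, note the value $1$ at $r=0$, and solve for $\eta$. The one genuine difference is your treatment of the fractional exponent. In (i) the paper sidesteps the branch issue entirely by taking moduli: from $\bigl(\tfrac{1-r^2}{1-\eta r^2}\bigr)^{\gamma+2}=1$ it passes to $|1-\eta r^2|=1-r^2$ and extracts $\cos\phi=\pm 1$ by a direct computation with $\eta=e^{i\phi}$; that is slightly more elementary than your root-of-unity/connectedness argument, though both work. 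In (ii), however, the paper simply asserts that $\bigl(\tfrac{1-r^2}{1-\eta r^2}\bigr)^{\gamma+2}\in\mathbb{R}$ forces $\tfrac{1-r^2}{1-\eta r^2}\in\mathbb{R}$, which is not a pointwise tautology (already for $\gamma=0$ a purely imaginary base has a real square, and for general non-integer $\gamma+2$ the preimage of $\mathbb{R}$ under $z\mapsto z^{\gamma+2}$ is a union of rays); your continuity-plus-discreteness argument, using that the ratio is nonvanishing, continuous in $r$, and equal to $1$ at $r=0$, is exactly what is needed to justify that step, and it does so uniformly in $\gamma>-1$ (compare Lemma \ref{lm_5}, where the paper avoids this by restricting to $-1<\gamma\le 0$). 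So your write-up is not only correct but fills a small gap in the published argument for part (ii).
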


	\begin{proof}
		$(i)$ As the sufficient part is obvious, we only prove the necessary part.
		Let $w=re^{i\theta}$ with $r\in[0, 1)$ and and $\theta\in[0,2\pi).$ Then 
		\begin{equation*}
			\widetilde{C_{{\psi_{0, \eta}}}}(re^{i\theta})=\left(\frac{1-r^2}{1-\eta r^2}\right)^{\gamma+2}.
		\end{equation*}
		Suppose that $\textbf{Ber}(C_{{\psi_{0, \eta}}})$ is singleton. As $\widetilde{C_{{\psi_{0, \eta}}}}(0)=1,$ so $\textbf{Ber}(C_{{\psi_{0, \eta}}})=\{1\}.$
		Thus, $\left(\frac{1-r^2}{1-\eta r^2}\right)^{\gamma+2}=1$ for all $r\in[0, 1).$
		This implies that 
		\begin{equation}\label{eq2}
			\left|\frac{1-r^2}{1-\eta r^2}\right|=1\implies 1-r^2=|1-\eta r^2|.
		\end{equation}
		Now consider $\eta=e^{i\phi}$ in \eqref{eq2}, then
		\begin{equation*}
			1-r^2=|1- e^{i\phi}r^2|=|1- (\cos\phi+i~\sin \phi)r^2|=|(1-r^2\cos \phi)-i~r^2 \sin \phi|.
		\end{equation*}
		This implies that 
		\begin{equation*}
			2r^2(\cos^2 \phi-1)=0\implies \cos\phi=\pm 1\implies \sin \phi=0.
		\end{equation*}
		Therefore, $\eta=\cos\phi+i \sin\phi= 1,$ as $\eta=-1$ does not satisfy the equation \eqref{eq2} for all $r\in[0, 1).$

		$(ii)$ Suppose that $\Im\{\textbf{Ber}(C_{{\psi_{0, \eta}}})\}=0$. Then 
		$\frac{1-r^2}{1-\eta r^2}\in\mathbb R$ for all $r\in[0, 1).$ Let $\frac{1-r^2}{1-\eta r^2}=k_r,$ where $k_r\in\mathbb R.$ Clearly, $k_r\neq 0$ for all $r\in[0, 1).$ Then
		$\eta=\frac{1}{r^2}-\frac{1-r^2}{k_r r^2}\in\mathbb R.$ Thus $\Im\{\eta\}=0.$ The converse part is obvious.
	\end{proof}

	At this point, we can identify the values of 
$\eta$ that ensure the Berezin range of $C_{\psi_{0, \eta}}$ is convex.
	\begin{theorem}\label{th1}
		Let $C_{{\psi_{0, \eta}}}\in\mathcal B(A^2_{\gamma}(\mathbb{D}))$ be  such that ${\psi_{0, \eta}}(w)=\eta w$ with $\eta\in\mathbb T$ and $w\in\mathbb D.$ Then $\textbf{Ber}(C_{{\psi_{0, \eta}}})$ is convex if and only if $\eta=1$ or $-1$.
	\end{theorem}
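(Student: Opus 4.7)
The plan is to reduce the convexity question to the geometry of a one-parameter curve. Since $\widetilde{C_{\psi_{0,\eta}}}(w)$ depends only on $|w|$, I would set $t = |w|^2 \in [0,1)$ and identify
\[
\textbf{Ber}(C_{\psi_{0,\eta}}) = g([0,1)), \qquad g(t) = \left(\frac{1-t}{1-\eta t}\right)^{\gamma+2}.
\]
Sufficiency is then immediate from this formula: for $\eta = 1$ we get $g \equiv 1$ and $\textbf{Ber}(C_{\psi_{0,1}}) = \{1\}$, while for $\eta = -1$ a direct check shows that $g$ maps $[0,1)$ continuously and monotonically onto $(0,1]$; both ranges are convex.

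For necessity, I would assume $\textbf{Ber}(C_{\psi_{0,\eta}})$ is convex with $\eta \neq 1$, and aim to force $\eta = -1$. The key observations are that $g(0) = 1 \in \textbf{Ber}(C_{\psi_{0,\eta}})$ and $g(t) \to 0$ as $t \to 1^-$, so $0$ lies in the closure of the range. Since $g$ is a real-analytic map from $[0,1)$ into $\mathbb{R}^2$, its image has two-dimensional Lebesgue measure zero (by a standard Lipschitz covering on compact subintervals of $[0,1)$). Hence the convex set $\textbf{Ber}(C_{\psi_{0,\eta}})$ has empty planar interior, and a convex subset of $\mathbb{R}^2$ with empty interior must lie inside an affine line. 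Since this line contains $1 = g(0)$ and its closure contains $0$, it is forced to be the real line. Therefore $g$ is real-valued on $[0,1)$, and Lemma \ref{lm_1}(ii) then gives $\Im(\eta) = 0$, which combined with $\eta \in \mathbb{T}\setminus\{1\}$ yields $\eta = -1$.

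The main obstacle I anticipate is the convex-analytic step that a planar convex set with empty interior lies in a line; this is a standard consequence of the relative interior of a convex set having the same dimension as its affine hull, so I would invoke it with a citation rather than prove it in place. A more hands-on alternative would use Lemma \ref{lm_1}(ii) to produce some $z_0 \in \textbf{Ber}(C_{\psi_{0,\eta}})$ with $\Im(z_0) \neq 0$, observe that convexity forces the non-degenerate triangle with vertices $0, 1, z_0$ into the closed convex hull of $\textbf{Ber}(C_{\psi_{0,\eta}})$, and derive a contradiction with the one-dimensionality of the image of $g$; but this route essentially re-proves the same topological fact and does not appear to offer any simplification.
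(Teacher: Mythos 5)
Your proof is correct and follows essentially the same route as the paper: sufficiency by direct computation of the two ranges, necessity by observing that the Berezin range is the image of a one-parameter curve, so convexity forces it to be a point or a segment of the line through $1$ whose closure contains $0$ (hence the real axis), after which Lemma \ref{lm_1} pins down $\eta$. Your measure-zero/empty-interior justification of the step ``a convex set equal to the image of a curve must lie on a line'' is in fact more careful than the paper's, which simply asserts that convexity of a path forces it to be a line segment or a point.
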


	\begin{proof}
		Let $w=re^{i\theta}$ with $r\in[0, 1)$ and $\theta\in[0,2\pi).$ Then 
		\begin{equation*}
			\widetilde{C_{{\psi_{0, \eta}}}}(re^{i\theta})=\left(\frac{1-r^2}{1-\eta r^2}\right)^{\gamma+2}.
		\end{equation*}
		If $\eta=1$ then it follows from Lemma \ref{lm_1} $(i)$ that $\textbf{Ber}(C_{{\psi_{0, \eta}}})$ is singleton and so $\textbf{Ber}(C_{{\psi_{0, \eta}}})$ is convex.
		If $\eta=-1$ then it is easy to observe that $\textbf{Ber}(C_{{\psi_{0, \eta}}})=(0, 1],$ which is again convex.\\
		Conversely, let $\textbf{Ber}(C_{{\psi_{0, \eta}}})$ be convex. now, 
		\begin{equation*}
			\widetilde{C_{{\psi_{0, \eta}}}}(re^{i\theta})=\left(\frac{1-r^2}{1-\eta r^2}\right)^{\gamma+2},
		\end{equation*}
		which is independent of $\theta.$ Clearly, $\textbf{Ber}(C_{{\psi_{0, \eta}}})$ is a path in $\mathbb C.$ Now, the convexity of $\textbf{Ber}(C_{{\psi_{0, \eta}}})$ implies that $\textbf{Ber}(C_{{\psi_{0, \eta}}})$ is either a line segment or a point. If $\textbf{Ber}(C_{{\psi_{0, \eta}}})$ is a point then from Lemma \ref{lm_1} $(i)$, it follows that $\eta=1$. Next, consider $\textbf{Ber}(C_{{\psi_{0, \eta}}})$ is a line segment. Now, we have $\widetilde{C_{{\psi_{0, \eta}}}}(0)=1$ and $\lim\limits_{r\to 1^-}\widetilde{C_{{\psi_{0, \eta}}}}(re^{i\theta})=0.$ Thus we can conclude that $\textbf{Ber}(C_{{\psi_{0, \eta}}})$ is a line segment passing through the point 1 and approaching the origin and so $\Im\{\textbf{Ber}(C_{{\psi_{0, \eta}}})\}=\{0\}.$ Then by applying Lemma \ref{lm_1} $(ii)$, we have 
		$\Im\{\eta\}=0$. Therefore, $\eta=1$ or $\eta=-1$.
	\end{proof}

    As a direct implication of the previous theorem, we present the following remark.
	\begin{remark}
		Let $\psi(w)=\frac{cw+d}{\bar{d}w+\bar{c}},$ where $c, d\in\mathbb C$ and $|c|^2-|d|^2=1.$ Then $\widetilde{C_{\psi}}(w)=\left(\frac{1-|w|^2}{1-\bar{w}\psi(w)}\right)^{\gamma+2}.$ If $d=0$ then $\psi(w)=\frac{c}{\bar{c}}w.$ Using Theorem \ref{th1}, $\textbf{Ber}(C_{\psi})$ is convex if and only if $c=1, -1, i, -i.$
	\end{remark}

    We turn our attention to analyze the convexity of the Berezin range of $C_{\psi_{\beta, 1}},$ where $\psi_{\beta, 1}$ is Blaschke factor. To do so, we first present a preliminary computational observation. 
	\subsection{Blaschke factor symbol}
	
	Let $\psi_{\beta, 1}(w)=\frac{w-\beta}{1-\bar{\beta}w}$ where $\beta, w \in\mathbb D.$ Then 
	\begin{equation}\label{eq3}
		\widetilde{C_{\psi_{\beta, 1}}}(w)=\left(\frac{1-|w|^2}{1-\bar{w}\psi_{\beta, 1}(w)}\right)^{\gamma+2}= \left(\frac{(1-|w|^2)(1-\bar{\beta}w)}{1-\bar{\beta}w-|w|^2+\bar{w}\beta}\right)^{\gamma+2},
	\end{equation}
	where $\gamma>-1$.
	
	\begin{lemma}\label{lm3}
		On $A^2_{\gamma}(\mathbb{D}),$ for $w\in\mathbb D$ the real and imaginary parts of $\widetilde{C_{\psi_{\beta, 1}}}(w)$ are given by
		\begin{eqnarray*}
			&&\Re\{\widetilde{C_{\psi_{\beta, 1}}}(w)\}\\
			&&=c_{\beta, w}^{\gamma+2}\Big(((1-|w|^2)(1-\Re\{\bar{\beta}w\})+2\Im^2\{\bar{\beta}w\})^2+\Im^2\{\bar{\beta}w\}(1+|w|^2-2\Re\{\bar{\beta w}\})^2\Big)^{\frac{\gamma+2}{2}}\\
			&&\cos \left((\gamma+2)\tan^{-1}\left(\frac{\Im\{\bar{\beta}w\}(1+|w|^2-2\Re\{\bar{\beta w}\})}{(1-|w|^2)(1-\Re\{\bar{\beta}w\})+2\Im^2\{\bar{\beta}w\}}\right)\right)
		\end{eqnarray*}
		and 
		\begin{eqnarray*}
			&&\Im\{\widetilde{C_{\psi_{\beta, 1}}}(w)\}\\
			&&=c_{\beta, w}^{\gamma+2}\Big(((1-|w|^2)(1-\Re\{\bar{\beta}w\})+2\Im^2\{\bar{\beta}w\})^2+\Im^2\{\bar{\beta}w\}(1+|w|^2-2\Re\{\bar{\beta w}\})^2\Big)^{\frac{\gamma+2}{2}}\\
			&& \sin \left((\gamma+2)\tan^{-1}\left(\frac{\Im\{\bar{\beta}w\}(1+|w|^2-2\Re\{\bar{\beta w}\})}{(1-|w|^2)(1-\Re\{\bar{\beta}w\})+2\Im^2\{\bar{\beta}w\}}\right)\right),
		\end{eqnarray*}
		where $c_{\beta, w}=\frac{1-|w|^2}{|1-|w|^2+2i\Im \{\beta \bar{w}\}|^2}$.
	\end{lemma}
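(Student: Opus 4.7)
\bigskip

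\noindent \textbf{Proof proposal.} The plan is to start from the explicit formula \eqref{eq3}, place the base of the $(\gamma+2)$-power in polar form, and then apply De Moivre's identity to separate the real and imaginary parts. The only non-trivial step is the algebraic cleanup of the complex fraction inside the parentheses.

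First, I would simplify the denominator of the fraction in \eqref{eq3}. Writing $\bar{\beta}w=\Re\{\bar{\beta}w\}+i\Im\{\bar{\beta}w\}$ and using $\bar{w}\beta=\overline{\bar{\beta}w}$, one gets
\[
1-\bar{\beta}w-|w|^2+\bar{w}\beta \;=\; (1-|w|^2)-2i\Im\{\bar{\beta}w\},
\]
which, using $\Im\{\beta\bar w\}=-\Im\{\bar\beta w\}$, matches the modulus appearing in the definition of $c_{\beta,w}$. Next, I would rationalize the fraction
\[
\frac{(1-|w|^2)(1-\bar{\beta}w)}{(1-|w|^2)-2i\Im\{\bar{\beta}w\}}
\]
by multiplying numerator and denominator by $(1-|w|^2)+2i\Im\{\bar{\beta}w\}$. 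The new denominator becomes $|1-|w|^2+2i\Im\{\beta\bar w\}|^2$, so the prefactor $(1-|w|^2)$ divided by this modulus is exactly $c_{\beta,w}$. Expanding the numerator $(1-\bar{\beta}w)\bigl((1-|w|^2)+2i\Im\{\bar{\beta}w\}\bigr)$ and collecting real and imaginary parts, one arrives at
\[
c_{\beta,w}\Bigl\{\bigl[(1-|w|^2)(1-\Re\{\bar{\beta}w\})+2\Im^2\{\bar{\beta}w\}\bigr]+i\,\Im\{\bar{\beta}w\}\bigl(1+|w|^2-2\Re\{\bar{\beta}w\}\bigr)\Bigr\}.
\]

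With this in polar form, the modulus is
\[
c_{\beta,w}\sqrt{\bigl((1-|w|^2)(1-\Re\{\bar{\beta}w\})+2\Im^2\{\bar{\beta}w\}\bigr)^2+\Im^2\{\bar{\beta}w\}(1+|w|^2-2\Re\{\bar{\beta}w\})^2},
\]
and the argument is the $\tan^{-1}$ term appearing in the statement. Raising to the $(\gamma+2)$-th power by De Moivre's identity and taking real and imaginary parts produces exactly the claimed expressions. The bulk of the work is bookkeeping; the only potentially tricky point, which I would verify carefully, is that $\bar{w}\beta-\bar{\beta}w=-2i\,\Im\{\bar{\beta}w\}$ (so the sign matches $+2i\,\Im\{\beta\bar{w}\}$ in the definition of $c_{\beta,w}$) and that after rationalization the explicit factor $(1-|w|^2)$ from the numerator combines correctly with $|1-|w|^2+2i\Im\{\beta\bar{w}\}|^2$ to produce the single power of $c_{\beta,w}$ (rather than $c_{\beta,w}^2$) multiplying the bracketed expression.
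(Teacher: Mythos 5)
Your proposal is correct and follows essentially the same route as the paper: simplify the denominator to $1-|w|^2+2i\Im\{\beta\bar w\}$, rationalize by the conjugate so that the prefactor becomes exactly one power of $c_{\beta,w}$, expand the numerator into the stated real and imaginary parts, and finish by writing the result in polar form and applying De Moivre's identity to the $(\gamma+2)$-th power. The sign check $\bar w\beta-\bar\beta w=-2i\,\Im\{\bar\beta w\}$ and the bookkeeping of the single $c_{\beta,w}$ factor that you flag are indeed the only delicate points, and both work out as you describe.
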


	\begin{proof}
		
		From \eqref{eq3}, we have
		
		\begin{eqnarray*}
			\widetilde{C_{\psi_{\beta, 1}}}(w)
			&&=\left(\frac{(1-|w|^2)(1-\bar{\beta}w)}{1-\bar{\beta}w-|w|^2+\bar{w}\beta}\right)^{\gamma+2}\\
			&&= \left(\frac{(1-|w|^2)(1-\bar{\beta}w)}{1-|w|^2+2i \Im\{\bar{w}\beta\}}\right)^{\gamma+2}\\
			&&=\left(\frac{(1-|w|^2)(1-\bar{\beta}w)(1-|w|^2-2i \Im\{\bar{w}\beta\})}{|1-|w|^2+2i \Im\{\bar{w}\beta\}|^2}\right)^{\gamma+2}.
		\end{eqnarray*}
		
		Let $c_{\beta, w}=\frac{1-|w|^2}{|1-|w|^2+2i\Im \{\beta \bar{w}\}|^2}$. 
		Thus, 
		\begin{eqnarray}\label{eq_4}
			\widetilde{C_{\psi_{\beta, 1}}}(w)
			&&=c_{\beta, w}^{\gamma+2}\left((1-\bar{\beta}w)(1-|w|^2-2i \Im\{\bar{w}\beta\})\right)^{\gamma+2}\nonumber\\
			&&=c_{\beta, w}^{\gamma+2}\Big((1-|w|^2)(1-\Re\{\bar{\beta}w\})+2\Im ^2(\bar{\beta}w)\nonumber\\
			&&+i \Im\{\bar{\beta}w\}(1+|w|^2-2\Re\{\bar{\beta}w\})\Big)^{\gamma+2}.
		\end{eqnarray}
		Now, consider 
		\begin{eqnarray*}
			(1-|w|^2)(1-\Re\{\bar{\beta}w\})+2\Im ^2(\bar{\beta}w)=r \cos \theta
		\end{eqnarray*}
		and 
		\begin{eqnarray*}
			\Im\{\bar{\beta}w\}(1+|w|^2-2\Re\{\bar{\beta}w\})=r \sin\theta,
		\end{eqnarray*}
		where $r=\left(((1-|w|^2)(1-\Re\{\bar{\beta}w\})+2\Im^2\{\bar{\beta}w\})^2+\Im^2\{\bar{\beta}w\}(1+|w|^2-2\Re\{\bar{\beta w}\})^2\right)^{\frac{1}{2}}$ and $-\frac{\pi}{2}<\theta<\frac{\pi}{2}.$
		
		Then it follows from \eqref{eq_4} that
		\begin{eqnarray*}
			\widetilde{C_{\psi_{\beta, 1}}}(w)
			&=&(rc_{\beta, w})^{\gamma+2}(\cos (\gamma+2)\theta+i\sin (\gamma+2)\theta).
		\end{eqnarray*}
		
		This gives the desired result.
	\end{proof}

    The following result shed light on certain geometric aspect of the Berezin range of $C_{\psi_{\beta, 1}}$ on $ A^2_{\gamma}(\mathbb{D}).$

	\begin{lemma}\label{lm_4}
		For the operator $C_{\psi_{\beta, 1}}$ on $ A^2_{\gamma}(\mathbb{D}),$ $\textbf{Ber}\{C_{\psi_{\beta, 1}}\}$ is closed under complex conjugation.
	\end{lemma}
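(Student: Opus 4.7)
The plan is to exhibit, for every $w\in\mathbb D$, a companion point $w'\in\mathbb D$ such that $\widetilde{C_{\psi_{\beta,1}}}(w')=\overline{\widetilde{C_{\psi_{\beta,1}}}(w)}$. This directly shows that $\textbf{Ber}(C_{\psi_{\beta,1}})$ is closed under complex conjugation. The natural candidate, suggested by the formula derived in the proof of Lemma \ref{lm3}, is
\[
w'=\frac{\beta}{\bar\beta}\,\bar w\qquad(\beta\neq0),
\]
with the trivial case $\beta=0$ handled separately (since then $C_{\psi_{0,1}}=I$ and $\widetilde{I}\equiv 1$). First I would verify that $|w'|^2=|w|^2$, so that $w'\in\mathbb D$.

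The key algebraic identity to check is that
\[
\bar\beta w'=\bar\beta\cdot\frac{\beta}{\bar\beta}\,\bar w=\beta\bar w=\overline{\bar\beta w},
\]
and consequently $\Re\{\bar\beta w'\}=\Re\{\bar\beta w\}$ and $\Im\{\bar\beta w'\}=-\Im\{\bar\beta w\}$. Feeding these into the explicit expressions of Lemma \ref{lm3}, I would observe that the scalar $c_{\beta,w}$ depends on $|w|^2$ and $\Im^2\{\beta\bar w\}$ only, so $c_{\beta,w'}=c_{\beta,w}$; the quantity $(1-|w|^2)(1-\Re\{\bar\beta w\})+2\Im^2\{\bar\beta w\}$ is invariant under $w\mapsto w'$; and the factor $\Im\{\bar\beta w\}(1+|w|^2-2\Re\{\bar\beta w\})$ switches sign. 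Hence the argument of $\tan^{-1}$ inside the trigonometric factors flips sign, so $\cos$ is preserved and $\sin$ is negated, yielding $\Re\{\widetilde{C_{\psi_{\beta,1}}}(w')\}=\Re\{\widetilde{C_{\psi_{\beta,1}}}(w)\}$ and $\Im\{\widetilde{C_{\psi_{\beta,1}}}(w')\}=-\Im\{\widetilde{C_{\psi_{\beta,1}}}(w)\}$, which is exactly the desired equality.

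Alternatively, one can bypass Lemma \ref{lm3} by working directly with \eqref{eq3}. A brief substitution shows
\[
1-|w'|^2+2i\Im\{\bar{w'}\beta\}=\overline{1-|w|^2+2i\Im\{\bar w\beta\}}\,,
\]
combined with $1-\bar\beta w'=\overline{1-\bar\beta w}$, so the quotient inside the $(\gamma+2)$-th power at $w'$ is the complex conjugate of the quotient at $w$. Since both quotients have positive real part (the numerator is positive real and the denominator has real part $1-|w|^2>0$), the principal branch of $z^{\gamma+2}$ satisfies $\overline{z^{\gamma+2}}=(\bar z)^{\gamma+2}$ on this domain, giving the conclusion.

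The only subtle point is the compatibility of complex conjugation with the non-integer power $z^{\gamma+2}$; I expect this branch-cut observation to be the main thing requiring care, but it is controlled because the relevant base always lies in the open right half-plane.
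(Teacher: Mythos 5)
Your proof is correct and follows essentially the same route as the paper: your companion point $w'=\frac{\beta}{\bar\beta}\bar w$ is exactly the paper's $re^{i(2\theta-\phi)}$ (writing $w=re^{i\phi}$, $\beta=\rho e^{i\theta}$), and the core identity $1-\bar{w'}\psi_{\beta,1}(w')=\overline{1-\bar w\psi_{\beta,1}(w)}$ is the one the paper verifies. Your extra remark that the base of the power $(\cdot)^{\gamma+2}$ lies in the open right half-plane, so conjugation commutes with the principal branch, is a point the paper passes over silently and is worth keeping.
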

	
	\begin{proof}
		Let $w=r e^{i\phi}$ and $\beta=\rho e^{i\theta}.$ We show that $\widetilde{C_{\psi_{\beta, 1}}}(r e^{i\phi})=\overline{\widetilde{C_{\psi_{\beta, 1}}}(r e^{i(2\theta-\phi)})},$ i.e.,
		\begin{eqnarray*}
			\left(\frac{1-r^2}{1-re^{-i\phi}\psi_{\beta, 1}(re^{i\phi})}\right)^{\gamma+2}=
			\left(\overline{\frac{1-r^2}{1-re^{-i(2\theta-\phi)}\psi_{\beta, 1}(re^{i(2\theta-\phi)})}}\right)^{\gamma+2}.
		\end{eqnarray*}
		This occurs if $e^{-i\phi}\psi_{\beta, 1}(re^{i\phi})=\overline{e^{-i(2\theta-\phi)}\psi_{\beta, 1}(re^{i(2\theta-\phi)})}.$ 
		Now,
		\begin{eqnarray*}
			e^{2i\theta} \overline{\psi_{\beta, 1}(re^{i(2\theta-\phi)})}&=&e^{2i\theta}
			\overline{\left(\frac{re^{i(2\theta-\phi)}-\beta}{1-\bar{\beta} re^{-i(2\theta-\phi)}}\right)}\\
			&&=e^{2i\theta}
			\frac{re^{i(\phi-2\theta)}-\rho e^{-i\theta}}{1-\rho e^{i\theta}re^{i(\phi-2\theta)}}\\
			&&=\frac{re^{i\phi}-\rho e^{i\theta}}{1-\rho e^{-i\phi}r e^{i\phi} }\\
			&&=\psi_{\beta, 1}(re^{i\phi}).
		\end{eqnarray*}
		
		This completes the proof.

	\end{proof}
	
With the previously established results, we are positioned to characterize convexity.
	\begin{theorem}\label{th_6}
		Let $C_{\psi_{\beta, 1}}\in \mathcal B(A^2_{\gamma}(\mathbb{D}))$ be such that $\Im\{\widetilde{C_{\psi_{\beta, 1}}}(w)\}=0$ implies $\Im\{\bar{\beta}w\}=0$. Then $\textbf{Ber}(C_{\psi_{\beta, 1}})$ is convex if and only if $\beta=0$.
	\end{theorem}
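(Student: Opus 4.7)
The sufficiency is immediate: if $\beta=0$ then $\psi_{0,1}$ is the identity map on $\mathbb D$, so $C_{\psi_{0,1}}$ is the identity operator on $A^2_{\gamma}(\mathbb D)$, $\widetilde{C_{\psi_{0,1}}}\equiv 1$, and hence $\textbf{Ber}(C_{\psi_{0,1}})=\{1\}$ is trivially convex. For the converse I would argue by contradiction, supposing $\beta\neq 0$ and that $\textbf{Ber}(C_{\psi_{\beta,1}})$ is convex.

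The first step combines Lemma \ref{lm_4} with convexity: since $\textbf{Ber}(C_{\psi_{\beta,1}})$ is closed under complex conjugation, for every $z\in\textbf{Ber}(C_{\psi_{\beta,1}})$ the midpoint $\frac{z+\bar z}{2}=\Re z$ lies in the Berezin range, so $\Re z=\widetilde{C_{\psi_{\beta,1}}}(w')$ for some $w'\in\mathbb D$. The hypothesis of the theorem then forces $\Im\{\bar\beta w'\}=0$, whence $w'=t\beta$ for some $t\in\mathbb R$ with $|t|<1/|\beta|$. Substituting $w'=t\beta$ into \eqref{eq3} and observing that all imaginary terms in the denominator cancel, I obtain
\begin{equation*}
\widetilde{C_{\psi_{\beta,1}}}(t\beta)=(1-t|\beta|^2)^{\gamma+2}.
\end{equation*}
Since $\gamma+2>1$ and $x\mapsto x^{\gamma+2}$ is strictly increasing on $(0,\infty)$, as $t$ sweeps $(-1/|\beta|,1/|\beta|)$ this traces the open interval $\bigl((1-|\beta|)^{\gamma+2},(1+|\beta|)^{\gamma+2}\bigr)$, whose lower endpoint is strictly positive since $|\beta|<1$. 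Consequently
\begin{equation*}
\Re\,\widetilde{C_{\psi_{\beta,1}}}(w)\in\bigl((1-|\beta|)^{\gamma+2},(1+|\beta|)^{\gamma+2}\bigr)\quad\text{for every }w\in\mathbb D.
\end{equation*}

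The second step exhibits a point violating this inclusion. Because $\beta\neq 0$ I can choose $\phi_0\in[0,2\pi)$ with $\Im\{\bar\beta e^{i\phi_0}\}\neq 0$, and set $w_r=re^{i\phi_0}$. Rewriting the Berezin transform as
\begin{equation*}
\widetilde{C_{\psi_{\beta,1}}}(w)=\left(\frac{(1-|w|^2)(1-\bar\beta w)}{(1-|w|^2)-2i\Im\{\bar\beta w\}}\right)^{\gamma+2},
\end{equation*}
as $r\to 1^-$ the numerator tends to $0$ while the modulus of the denominator stays bounded below by $2|\Im\{\bar\beta e^{i\phi_0}\}|>0$. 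Hence $|\widetilde{C_{\psi_{\beta,1}}}(w_r)|\to 0$, and in particular $\Re\,\widetilde{C_{\psi_{\beta,1}}}(w_r)\to 0$. For $r$ close enough to $1$ this real part is strictly less than $(1-|\beta|)^{\gamma+2}$, contradicting the inclusion obtained above.

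I expect the main obstacle to be the first step, namely pinning down the exact set of real values taken by the Berezin transform. The hypothesis is really an equivalence in disguise: the reverse implication $\Im\{\bar\beta w\}=0\Rightarrow\Im\,\widetilde{C_{\psi_{\beta,1}}}(w)=0$ is automatic from the algebraic cancellation in \eqref{eq3}, while the stated direction is what restricts the real values to $\bigl((1-|\beta|)^{\gamma+2},(1+|\beta|)^{\gamma+2}\bigr)$. Once this parametrization is secured, the boundary asymptotic in the second step is routine, and the conjugation symmetry of Lemma \ref{lm_4} is the bridge that turns an arbitrary point of the Berezin range into a real one.
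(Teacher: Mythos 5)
Your proposal is correct and follows essentially the same route as the paper's proof: conjugation symmetry (Lemma \ref{lm_4}) plus convexity places $\Re\,\widetilde{C_{\psi_{\beta,1}}}(w)$ in the Berezin range, the hypothesis forces the witnessing point onto the real line through $\beta$, the computation $\widetilde{C_{\psi_{\beta,1}}}(t\beta)=(1-t|\beta|^2)^{\gamma+2}$ pins the real values to $\bigl((1-|\beta|)^{\gamma+2},(1+|\beta|)^{\gamma+2}\bigr)$, and a boundary limit yields the contradiction. Your choice of a direction $\phi_0$ with $\Im\{\bar\beta e^{i\phi_0}\}\neq 0$ in the last step is in fact slightly more careful than the paper's unqualified radial-limit claim, but the argument is the same.
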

	
	\begin{proof}
		If $\beta=0$, then $\textbf{Ber}(C_{\psi_{\beta, 1}})=\{1\},$ which is convex. Conversely, suppose that $\textbf{Ber}(C_{\psi_{\beta, 1}})$ is convex. It follows from Lemma \ref{lm_4} that 
		$$\frac 12 \widetilde{C_{\psi_{\beta, 1}}}(w)+\frac 12 \overline{\widetilde{C_{\psi_{\beta, 1}}}(w)} =\Re\{\widetilde{C_{\psi_{\beta, 1}}}(w)\}\in\textbf{Ber}(C_{\psi_{\beta, 1}}).$$
		Then for each $w\in\mathbb D,$ we get $z\in\mathbb D$ such that $\widetilde{C_{\psi_{\beta, 1}}}(z)=\Re\{\widetilde{C_{\psi_{\beta, 1}}}(w)\}.$ Then $\Im\{\widetilde{C_{\psi_{\beta, 1}}}(z)\}=0.$ It implies that $\Im\{\bar{\beta}z\}=0$ and so $z=r\beta$ for some $r\in\left(-\frac{1}{|\beta|}, \frac{1}{|\beta|}\right)$. Thus we have 
		\begin{align*}
			&\widetilde{C_{\psi_{\beta, 1}}}(z)\\&=\Re\{\widetilde{C_{\psi_{\beta, 1}}}(r\beta)\}\\
			&=\left(\frac{1-|r\beta|^2}{|1-|r\beta|^2+2i\Im \{\beta \bar{r\beta}\}|^2}\right)^{\gamma+2}\\
			&[((1-|r\beta|^2)(1-\Re\{\bar{\beta}r\beta\})+2\Im^2\{\bar{\beta}r\beta\})^2+\Im^2\{\bar{\beta}r\beta\}(1+|r\beta|^2-2\Re\{\bar{\beta r\beta}\})^2]^{\frac{\gamma+2}{2}}\\
			&\cos \left((\gamma+2)\tan^{-1}\left(\frac{\Im\{\bar{\beta}r\beta\}(1+|r\beta|^2-2\Re\{\bar{\beta r\beta}\})}{(1-|r\beta|^2)(1-\Re\{\bar{\beta}r\beta\})+2\Im^2\{\bar{\beta}r\beta\}}\right)\right)\\
                &=\left(\frac{1}{1-|r\beta|^2}\right)^{\gamma+2}\left((1-|r\beta|^2)^2(1-r|\beta|^2)^2\right)^{\frac{\gamma+2}{2}} \cos((\gamma+2)\tan^{-1}0)\\
			&=(1-r|\beta|^2)^{\gamma+2}.
		\end{align*}
		Therefore, $\{\widetilde{C_{\psi_{\beta, 1}}}(r\beta):r\in(-\frac{1}{|\beta|}, \frac{1}{|\beta|})\}=((1-|\beta|)^{\gamma+2}, (1+|\beta|)^{\gamma+2}).$ Let $w=\rho e^{i\phi}.$ Then 
        $\lim_{\rho\to 1^-}\widetilde{C_{\psi_{\beta, 1}}}(\rho e^{i\phi})=0$ for $\beta\neq0$ and $1$ for $\beta=0.$ It follows that when $\beta\neq0,$ given $\epsilon$ with $0<\epsilon<(1-|\beta|)^{\gamma+2},$ there exists a point $w$ such that  $|\Re\{\widetilde{C_{\psi_{\beta, 1}}}(w)\}|<\epsilon.$ Now, for this $w$ there exists $z\in\mathbb D$ such that $|\widetilde{C_{\psi_{\beta, 1}}}(z)|=|\Re\{\widetilde{C_{\psi_{\beta, 1}}}(w)\}|<\epsilon.$ This contradicts the fact that $\widetilde{C_{\psi_{\beta, 1}}}(z)\in((1-|\beta|)^{\gamma+2}, (1+|\beta|)^{\gamma+2}).$ Hence $\beta=0.$
	\end{proof}

    The previous theorem considers the operator $C_{\psi_{\beta, 1}}$ on the space $A^2_{\gamma}(\mathbb{D}),$ under the assumption that $\Im\{\widetilde{C_{\psi_{\beta, 1}}}(w)\}=0$ implies $\Im\{\bar{\beta}w\}=0$. Below, we provide a range of values for $\gamma$ for which this operator inherently satisfies the stated condition.
	\begin{lemma}\label{lm_5}
		Let $-1<\gamma\leq 0$ and $w\in\mathbb{D}.$ Then $\Im\{\widetilde{C_{\psi_{\beta, 1}}}(w)\}=0$ if and only if $\Im\{\bar{\beta}w\}=0$.
	\end{lemma}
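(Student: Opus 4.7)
The plan is to read off the imaginary part of $\widetilde{C_{\psi_{\beta,1}}}(w)$ from Lemma \ref{lm3} and identify precisely which factor can vanish. Lemma \ref{lm3} writes
\[
\Im\{\widetilde{C_{\psi_{\beta,1}}}(w)\}=c_{\beta,w}^{\gamma+2}\,r^{\gamma+2}\sin\bigl((\gamma+2)\theta\bigr),
\]
where $r\geq 0$ is the modulus introduced in that proof and $\theta\in(-\pi/2,\pi/2)$ is the $\tan^{-1}$ of the ratio appearing there. The ``$\Leftarrow$'' direction is immediate: if $\Im\{\bar\beta w\}=0$, then the numerator of that ratio vanishes, forcing $\theta=0$ and hence $\sin((\gamma+2)\theta)=0$.

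For the converse, I would first dispose of the nonessential factors. Since $|w|<1$, we have $c_{\beta,w}>0$. For $r$, note that $1-\Re\{\bar\beta w\}\geq 1-|\beta||w|>0$, so
\[
(1-|w|^2)(1-\Re\{\bar\beta w\})+2\,\Im^{2}\{\bar\beta w\}>0,
\]
and this already forces $r>0$. Because $\gamma+2>0$, raising $c_{\beta,w}$ and $r$ to the $(\gamma+2)$-th power preserves positivity, so $\Im\{\widetilde{C_{\psi_{\beta,1}}}(w)\}=0$ if and only if $\sin((\gamma+2)\theta)=0$.

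Here is where the hypothesis $-1<\gamma\leq 0$ enters, and it is the crux of the argument. Under this assumption $1<\gamma+2\leq 2$, and since $\theta\in(-\pi/2,\pi/2)$, we obtain
\[
(\gamma+2)\theta\in\bigl(-(\gamma+2)\pi/2,(\gamma+2)\pi/2\bigr)\subseteq(-\pi,\pi).
\]
The only zero of $\sin$ inside $(-\pi,\pi)$ is $0$, so $\sin((\gamma+2)\theta)=0$ forces $\theta=0$, hence $\sin\theta=0$, hence
\[
\Im\{\bar\beta w\}\bigl(1+|w|^2-2\Re\{\bar\beta w\}\bigr)=0.
\]

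Finally, the second factor is strictly positive: writing $1+|w|^2-2\Re\{\bar\beta w\}\geq 1+|w|^2-2|\beta||w|=(1-|\beta||w|)^2+|w|^2(1-|\beta|^2)>0$ because $|\beta|,|w|<1$. Therefore $\Im\{\bar\beta w\}=0$, completing the proof. The only delicate step is keeping track of the interval in which $(\gamma+2)\theta$ lives; the restriction $\gamma\leq 0$ is exactly what prevents spurious zeros of $\sin((\gamma+2)\theta)$ away from $\theta=0$, and this is where the stated range of $\gamma$ is essential.
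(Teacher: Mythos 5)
Your proof is correct and follows essentially the same route as the paper: both reduce to the vanishing of $\sin((\gamma+2)\tan^{-1}(\cdot))$ after discarding the strictly positive factors, and both use $1<\gamma+2\leq 2$ together with $\tan^{-1}(\cdot)\in(-\pi/2,\pi/2)$ to confine $(\gamma+2)\theta$ to $(-\pi,\pi)$ and force $\theta=0$. Your explicit positivity estimates (e.g.\ $1+|w|^2-2\Re\{\bar\beta w\}=(1-|\beta||w|)^2+|w|^2(1-|\beta|^2)>0$) are slightly more detailed than the paper's, but the argument is the same.
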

	\begin{proof}
		From Lemma \ref{lm3}, we obtain
		\begin{eqnarray}\label{eq_9}
			&&\Im\{\widetilde{C_{\psi_{\beta, 1}}}(w)\}\nonumber\\
			&&=c_{\beta, w}^{\gamma+2}\Big(((1-|w|^2)(1-\Re\{\bar{\beta}w\})+2\Im^2\{\bar{\beta}w\})^2+\Im^2\{\bar{\beta}w\}(1+|w|^2-2\Re\{\bar{\beta w}\})^2\Big)^{\frac{\gamma+2}{2}}\nonumber\\
			&& \sin \left((\gamma+2)\tan^{-1}\left(\frac{\Im\{\bar{\beta}w\}(1+|w|^2-2\Re\{\bar{\beta w}\})}{(1-|w|^2)(1-\Re\{\bar{\beta}w\})+2\Im^2\{\bar{\beta}w\}}\right)\right),
		\end{eqnarray}
		where $c_{\beta, w}=\frac{1-|w|^2}{|1-|w|^2+2i\Im \{\beta \bar{w}\}|^2}$.\\
		If  $\Im\{\bar{\beta}w\}=0$ then $\Im\{\widetilde{C_{\psi_{\beta, 1}}}(w)\}=0$. Conversely, let $\Im\{\widetilde{C_{\psi_{\beta, 1}}}(w)\}=0$. Clearly, $c_{\beta, w}>0, (1-|w|^2)(1-\Re\{\bar{\beta}w\})+2\Im^2\{\bar{\beta}w\}>0$ and $1+|w|^2-2\Re\{\bar{\beta w}\}>0$. Then it follows from \eqref{eq_9} that 
		$$\sin \left((\gamma+2)\tan^{-1}\left(\frac{\Im\{\bar{\beta}w\}(1+|w|^2-2\Re\{\bar{\beta w}\})}{(1-|w|^2)(1-\Re\{\bar{\beta}w\})+2\Im^2\{\bar{\beta}w\}}\right)\right)=0.$$ 
		Since $(1-|w|^2)(1-\Re\{\bar{\beta}w\})+2\Im^2\{\bar{\beta}w\}>0,$ it follows that
		$$-\frac{\pi}{2}<\tan^{-1}\left(\frac{\Im\{\bar{\beta}w\}(1+|w|^2-2\Re\{\bar{\beta w}\})}{(1-|w|^2)(1-\Re\{\bar{\beta}w\})+2\Im^2\{\bar{\beta}w\}}\right)<\frac{\pi}{2}.$$ \
		For $-1<\gamma\leq 0$, 
		$$-\pi<(\gamma+2)\tan^{-1}\left(\frac{\Im\{\bar{\beta}w\}(1+|w|^2-2\Re\{\bar{\beta w}\})}{(1-|w|^2)(1-\Re\{\bar{\beta}w\})+2\Im^2\{\bar{\beta}w\}}\right)<\pi.$$
		Thus, $\tan^{-1}\left(\frac{\Im\{\bar{\beta}w\}(1+|w|^2-2\Re\{\bar{\beta w}\})}{(1-|w|^2)(1-\Re\{\bar{\beta}w\})+2\Im^2\{\bar{\beta}w\}}\right)=0.$ This implies that 
		$\Im\{\bar{\beta}w\}=0$.
	\end{proof}

	The following corollary follows immediately from the lemma \ref{lm_5} and Theorem \ref{th_6}. 
	\begin{cor}
		Let $-1<\gamma\leq 0.$ For the operator $C_{\psi_{\beta, 1}}$ on $A^2_{\gamma}(\mathbb{D}),$ $\textbf{Ber}(C_{\psi_{\beta, 1}})$ is convex if and only if $\beta=0$.
	\end{cor}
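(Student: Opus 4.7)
The plan is to show that the corollary is essentially an immediate consequence of combining the two preceding results, with no new technical work required. I would first note that the statement to be proved is identical in form to the conclusion of Theorem~\ref{th_6}, but with its hypothesis (the implication $\Im\{\widetilde{C_{\psi_{\beta,1}}}(w)\}=0 \Rightarrow \Im\{\bar\beta w\}=0$) removed in favor of the analytic assumption $-1<\gamma\le 0$. So the entire task reduces to verifying that, in the range $-1<\gamma\le 0$, the hypothesis of Theorem~\ref{th_6} holds automatically.

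Next I would invoke Lemma~\ref{lm_5}, which says precisely that for $-1<\gamma\le 0$ and any $w\in\mathbb D$, one has $\Im\{\widetilde{C_{\psi_{\beta,1}}}(w)\}=0$ if and only if $\Im\{\bar\beta w\}=0$. In particular, the forward implication furnishes exactly the hypothesis needed for Theorem~\ref{th_6}. With this observation, Theorem~\ref{th_6} applies directly and yields that $\textbf{Ber}(C_{\psi_{\beta,1}})$ is convex if and only if $\beta=0$, which is the desired equivalence.

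There is no real obstacle here since both Lemma~\ref{lm_5} and Theorem~\ref{th_6} have already been established. The only thing I would be mindful of is writing the proof so that the logical flow is transparent: state that Lemma~\ref{lm_5} verifies the standing hypothesis of Theorem~\ref{th_6} for the given range of $\gamma$, and then simply invoke Theorem~\ref{th_6}. A one- or two-sentence proof of the form ``By Lemma~\ref{lm_5}, the hypothesis of Theorem~\ref{th_6} holds for all $\beta\in\mathbb D$ whenever $-1<\gamma\le 0$. Applying Theorem~\ref{th_6} now yields the claim'' would suffice.
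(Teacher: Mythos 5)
Your proposal is correct and matches the paper exactly: the paper states that the corollary ``follows immediately from Lemma \ref{lm_5} and Theorem \ref{th_6},'' which is precisely your argument of using Lemma \ref{lm_5} to verify the hypothesis of Theorem \ref{th_6} when $-1<\gamma\leq 0$ and then invoking that theorem.
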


	We conclude by establishing that $0$ lies within the closure of the Berezin range of $C_{\psi}$ , yet it is not contained in the Berezin range itself.
	
	\begin{proposition}\label{prop_0}
		Let $\psi$ be a non-identity holomorphic self-map on $\mathbb{D}$, then $0 \in \overline{\textbf{Ber}(C_{\psi})}\setminus \textbf{Ber}(C_{\psi}).$
	\end{proposition}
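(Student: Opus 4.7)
The plan is to work directly from the explicit formula for the Berezin transform of $C_\psi$ derived in \eqref{e1}, namely
\[
\widetilde{C_\psi}(w)=\left(\frac{1-|w|^2}{1-\bar w\,\psi(w)}\right)^{\gamma+2},\qquad w\in\mathbb D,
\]
and to split the statement into its two halves: the exclusion $0\notin\textbf{Ber}(C_\psi)$, and the approximation $0\in\overline{\textbf{Ber}(C_\psi)}$.

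The first half is essentially an inspection. For any $w\in\mathbb D$, both $|w|<1$ and $|\psi(w)|<1$, so $|\bar w\,\psi(w)|<1$ and hence $1-\bar w\,\psi(w)\neq 0$; the numerator $1-|w|^2$ is also strictly positive. Thus the quotient is a nonzero complex number and no power of it can vanish, giving $\widetilde{C_\psi}(w)\neq 0$ at every point of $\mathbb D$.

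For the second half, the idea is to push $w$ radially toward a well-chosen boundary point. Specifically, I would try to produce $\zeta\in\mathbb T$ for which the radial limit $\eta:=\lim_{r\to 1^-}\psi(r\zeta)$ exists with $\eta\neq\zeta$. Granting such a $\zeta$, along $w=r\zeta$ the numerator $1-r^2$ tends to $0$ while $|1-r\bar\zeta\,\psi(r\zeta)|\to|1-\bar\zeta\eta|$; this latter quantity is strictly positive (automatic if $|\eta|<1$, and if $|\eta|=1$ then $\bar\zeta\eta=1$ would force $\eta=\zeta$, a contradiction). Hence $\widetilde{C_\psi}(r\zeta)\to 0$, and so $0\in\overline{\textbf{Ber}(C_\psi)}$.

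The main obstacle is therefore the existence of such a boundary point. Here the argument I would use rests on classical $H^\infty$ theory. Since $\psi\in H^\infty(\mathbb D)$, Fatou's theorem yields radial limits $\psi^*(\zeta)$ for almost every $\zeta\in\mathbb T$. If one had $\psi^*(\zeta)=\zeta$ almost everywhere, then the bounded holomorphic function $\psi(w)-w$ would admit zero boundary values on a set of positive Lebesgue measure in $\mathbb T$; by the uniqueness theorem for bounded holomorphic functions (Luzin–Privalov), this forces $\psi(w)\equiv w$, contradicting the hypothesis that $\psi$ is not the identity. Consequently, the set $\{\zeta\in\mathbb T:\psi^*(\zeta)\text{ exists and }\psi^*(\zeta)\neq\zeta\}$ has positive measure, and any $\zeta$ from this set completes the argument.
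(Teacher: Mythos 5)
Your proposal is correct and follows essentially the same route as the paper: the same formula \eqref{e1}, the same radial-limit argument for $0\in\overline{\textbf{Ber}(C_{\psi})}$, and the same observation that $1-\bar{w}\psi(w)\neq 0$ forces $\widetilde{C_{\psi}}(w)\neq 0$ on $\mathbb{D}$. The one genuine addition is that you justify the existence of a boundary point $\zeta\in\mathbb{T}$ with radial limit $\psi^{*}(\zeta)\neq\zeta$ via Fatou's theorem and the Luzin--Privalov uniqueness theorem, a step the paper asserts without proof; this makes your write-up slightly more complete than the original.
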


	\begin{proof}
		For any $w \in \mathbb{D},$ from \eqref{e1} we have 
		\begin{align}\label{e11}
			\widetilde{C_{\psi}}(w)=\left(\frac{1-|w|^2}{1-\bar{w}\psi(w)}\right)^{\gamma+2}.
		\end{align}
		Since $\psi$ is not an identity map on $\mathbb{D}$, there exists $w_0 \in \mathbb{T}$ such that the radial limit of $\psi$ exists at $w_0$ with $\psi(w_0) \neq w_0.$ Thus if $w$ radially approaches the point $w_0,$ then from \eqref{e11}, we get $\widetilde{C_{\psi}}(w) \to 0.$ Therefore, we obtain $0 \in \overline{\textbf{Ber}(C_{\psi})}.$
		
		Again, from \eqref{e11} we get
		\begin{align*}
			|\widetilde{C_{\psi}}(w)|=\frac{(1-|w|^2)^{\gamma+2}}{|1-\bar{w}\psi(w)|^{\gamma+2}} \geq \frac{1}{2^{\gamma+2}}(1-|w|^2)^{\gamma+2}>0\,\,\text{for all $w \in \mathbb{D}$}.
		\end{align*}
		Therefore, we obtain $\widetilde{C_{\psi}}(w) \neq 0$ for all $w \in \mathbb{D}.$ This completes the proof.
	\end{proof}

	\begin{remark}
		If $\psi$ is a non-identity holomorphic self-map on $\mathbb{D}$ then $\textbf{Ber}(C_{\psi})$ is not closed.
	\end{remark}

From Proposition \ref{prop_9} and Proposition \ref{prop_0}, we observe that if $C_{\psi,\phi }$ is a Weyl-type unitary operator or an unweighted non-identity composition operator then $0 \in \overline{\textbf{Ber}(C_{\psi,\phi })}\setminus \textbf{Ber}(C_{\psi,\phi }).$ However, if $\psi$ is such that $0\in\psi(\mathbb D)$ with $C_{\psi,\phi }\in \mathcal B(A^2_{\gamma}(\mathbb{D}))$ then $0\in\textbf{Ber}(C_{\psi,\phi })$. This observation motivates the following questions for future investigation regarding the inclusion of the origin in the Berezin range.

\noindent
\textbf{Question:}
Let $C_{\psi,\phi }$ be a bounded weighted composition operator on $A^2_{\gamma}(\mathbb{D}),$ then\\
$(i)$~~characterize $\psi$ and $\phi$ such that $0 \in  \overline{\textbf{Ber}(C_{\psi,\phi })} \setminus \textbf{Ber}(C_{\psi,\phi })$.\\
$(ii)$~~characterize $\psi$ and $\phi$ such that $0 \in \textbf{Ber}(C_{\psi,\phi })$.\\
$(iii)$~~characterize $\psi$ and $\phi$ such that $0$ is contained in the interior of $\textbf{Ber}(C_{\psi,\phi })$.

Notably, in \cite{BS_IEOT_2002} Burdon and Shapiro provide important insights into precisely when the origin lies in the numerical range of composition operators on the Hardy-Hilbert space, which may offer valuable guidance for exploring similar questions about the Berezin range of weighted composition operators on both the Hardy-Hilbert space and weighted Bergman spaces.

		\section*{Declarations}

\noindent\textit{Conflict of interest.} The authors declare that data sharing is not applicable to this article, as no data sets were generated or analyzed during the current study. The authors declare no conflict of interest.

\noindent\textit{Acknowledgements.} Miss Somdatta Barik would like to thank UGC, Govt. of India, for the financial support in the form of Senior Research Fellowship under the mentorship of Prof. Kallol Paul.

	\bibliographystyle{amsplain}

\begin{thebibliography}{99}
		
		\bibitem{AFW_JFA_1993} P. Ahern, M. Flores and W. Rudin, An invariant volume mean value property, J. Funct. Anal. 111 (1993), 380--397.

       \bibitem{AGS_CAOT_2024} A. Augustine, M. Garayev and P. Shankar, On the convexity of the Berezin range of composition operators and related questions, (2024), arXiv:2401.03176 [math.FA].

		
		\bibitem{AGS_CAOT_2023} A. Augustine, M. Garayev and P. Shankar, Composition operators, convexity of their Berezin range and related questions, Complex Anal. Oper. Theory 17 (2023), no. 8, Paper No. 126, 22 pp.
		
		% \bibitem{BER1974} F.A. Berezin,  Quantization, Math. USSR-Izv., 8 (1974), 1109--1163.
		
		
		 \bibitem{BER1972} F.A. Berezin, Covariant and contravariant symbols for operators, Math. USSR-Izv. 6 (1972), 1117--1151.
		
		\bibitem{BS_IEOT_2002} P.S. Bourdon and J.H. Shapiro, When is zero in the numerical range of a composition operator?, Integral Equations Operator Theory 44 (2002), no. 4, 410--441.
		
		
		%\bibitem{BS_JMAA_2000} P.S. Bourdon and J.H. Shapiro, The numerical ranges of automorphic composition operators, J. Math. Anal. Appl., 251 (2000), 839--854.
		
		
		\bibitem {CC_LAA_2022} C.C. Cowen and C. Felder, 
		Convexity of the Berezin range, Linear Algebra Appl. 647 (2022), 47--63.
		
		\bibitem {C_PAMS_2012} L.A. Coburn, Berezin transform and Weyl-type unitary operators on the Bergman space, Proc. Amer. Math. Soc. 140 (2012), 3445--3451.


       \bibitem{CZ_IJM_2007} Z. Čučković and R. Zhao, Weighted composition operators between different weighted Bergman spaces and different Hardy spaces, Illinois J. Math. 51 (2) (2007), 479--498.

        
		\bibitem {E_LAA_1995} M. Engliš, Toeplitz operators and the Berezin transform on $H^2$, Linear Algebra Appl. 223/224, (1995), 171--204.
		
		\bibitem {E_JFA_1994} M. Engliš, Functions Invariant under the Berezin Transform, J. Funct. Anal. 121 (1994), no. 1, 233--254.

      \bibitem {GKP_IEOT_2010} E.A. Gallardo-Gutiérrez, R. Kumar and J.R. Partington, Boundedness, Compactness and Schatten-class Membership of Weighted Composition Operators, Integral Equations Operator Theory 67 (2010), no. 4, 467--479 .


        
		 \bibitem{Gustafson_book} K.E. Gustafson and D.K.M. Rao, Numerical Range. The Field of Values of Linear Operators and Matrices, Springer-Verlag, New York, (1997).
         
		\bibitem{K_CAOT_2013} M.T. Karaev, Reproducing kernels and Berezin symbols techniques in various questions of operator theory, Complex Anal. Oper. Theory 7 (2013), no. 4, 983--1018.
		
		\bibitem{KAR_JFA_2006} M.T. Karaev, Berezin symbol and invertibility of operators on the functional Hilbert spaces, J. Funct. Anal. 238 (2006), no. 1, 181--192.
		
		%\bibitem{K_PAMS_1972} E.M. Klein, The numerical range of a Toeplitz operator, Proc. Amer. Math. Soc., 35 (1972), 101--103.
		
		
		%\bibitem{MMM_JMAA_23}	S. Majee, A. Maji and A. Manna, Numerical radius and Berezin number inequality,	J. Math. Anal. Appl., 517 (2023), no. 1, Paper No. 126566, 20 pp.
		
		\bibitem{NR_Conference_1994} E. Nordgren and P. Rosenthal, Boundary values of Berezin symbols, in: Nonselfadjoint Operators and Related Topics, Oper. Theory Adv. Appl. 73 (1994), pp.362--368.
		
		%\bibitem{PR_Book_2016} V.I. Paulsen and M. Raghupati, An introduction to the theory of reproducing kernel Hilbert spaces, Cambridge Univ. Press, (2016).
		
		
		%\bibitem{RUDIN_BOOK} W. Rudin, Real and complex analysis, McGraw-Hill Book Co., New York, (1987).
		
		\bibitem{Sen_2025} A. Sen, S. Halder, R. Birbonshi and K. Paul, Numerical range of Toeplitz and Composition operators on weighted Bergman spaces, (2025). arXiv:2502.02833v1 [math.FA]
		
		%\bibitem{T_JOT_1995} J. K. Thukral, The numerical range of a Toeplitz operator with harmonic symbol, J. Operator Theory, 34 (1995), no. 2, 213--216.
		
		
		
		%\bibitem{WW_IEOT_2009} K.Z. Wang and P.Y. Wu, Numerical ranges of radial Toeplitz operators on Bergman space, Integral Equations Operator Theory, 65 (2009), no. 4, 581--591. 
		
		% \bibitem{wu_gau_book} P.Y. Wu and H.-L. Gau, Numerical ranges of Hilbert space operators, Encyclopedia Math. Appl., 179, Cambridge University Press, Cambridge, (2021).
		
		\bibitem{ZZ_JOT_2016} X. Zhao and D. Zheng, Invertibility of Toeplitz operators via Berezin transforms, J. Operator Theory 75 (2016), no. 2, 475--495.

        \bibitem{Zhu_AMS_2021} K. Zhu, The Berezin transform and its Applications, Acta Math. Sci. 41 (2021), no. 6, 1839--1858.
		
		\bibitem{Zhu_book}  K. Zhu, Operator theory in function spaces, Math. Surveys Monogr., 138 American Mathematical Society, Providence, RI, (2007).

		
		\bibitem{Z_PAMS_2003} N. Zorboska, The Berezin transform and radial operators, Proc. Am. Math. Soc. 131 (3) (2003), 793--800.
		
		
	\end{thebibliography}
	
\end{document}